\theoremstyle{plain}
\newtheorem{theorem}[equation]{Theorem}
\newtheorem{proposition}[equation]{Proposition}
\newtheorem{lemma}[equation]{Lemma}
\newtheorem{corollary}[equation]{Corollary}
\theoremstyle{definition}
\newtheorem{remark}[equation]{Remark}
\newtheorem{definition}[equation]{Definition}
\newtheorem{notation}[equation]{Notation}
\newtheorem{construction}[equation]{Construction}
\newtheorem{subsec}[equation]{}
\numberwithin{equation}{section}
\newcommand{\Z}{{\mathbb Z}}
\newcommand{\R}{{\mathbb R}}
\newcommand{\C}{{\mathbb C}}
\newcommand{\A}{{\mathbb A}}
\newcommand{\G}{{\mathbb G}}
\newcommand{\Q}{{\mathbb Q}}
\newcommand{\GG}{{\bf G}}
\newcommand{\YY}{{\bf Y}}
\newcommand{\HH}{{\bf H}}
\newcommand{\CC}{{\bf C}}
\newcommand{\TT}{{\bf T}}
\newcommand{\XX}{{\bf X}}
\newcommand{\X}{{\sf X}}
\newcommand{\gG}{{\scriptscriptstyle \bf G}}
\newcommand{\yY}{{\scriptscriptstyle \bf Y}}
\newcommand{\xX}{{\scriptscriptstyle \bf X}}
\newcommand{\Cc}{{\scriptscriptstyle C}}
\newcommand{\Hh}{{\scriptscriptstyle H}}
\newcommand{\into}{\hookrightarrow}
\DeclareMathOperator{\Inn}{Inn}
\DeclareMathOperator{\Aut}{Aut}
\DeclareMathOperator{\SAut}{SAut}
\DeclareMathOperator{\Out}{Out}
\DeclareMathOperator{\SOut}{SOut}
\DeclareMathOperator{\Lie}{Lie}
\newcommand{\Hom}{{\rm Hom}}
\newcommand{\Gal}{{\rm Gal}}
\newcommand{\ad}{{\rm ad}}
\newcommand{\inn}{{\rm inn}}
\newcommand{\ov}{\overline}
\newcommand{\sss}{{\rm ss}}
\newcommand{\ssc}{{\rm sc}}
\newcommand{\isoto}{\overset{\sim}{\to}}
\newcommand{\onto}{\twoheadrightarrow}
\newcommand{\labelto}[1]{\xrightarrow{\makebox[1.5em]{\scriptsize ${#1}$}}}
\newcommand{\hs}{\kern 0.8pt}
\newcommand{\hssh}{\kern 1.2pt}
\newcommand{\hshs}{\kern 1.6pt}
\newcommand{\hssss}{\kern 2.0pt}
\newcommand{\hm}{\kern -0.8pt}
\newcommand{\hmm}{\kern -1.2pt}
\newcommand{\emm}{\bfseries}
\renewcommand{\hbar}{{\bar h}}
\newcommand{\gam}{{\gamma}}
\newcommand{\Gam}{{\Gamma}}
\newcommand{\upgam}{\hs^\gamma\hm}
\newcommand{\upgams}{\hs^{\gamma*}}
\newcommand{\id}{{\rm id}}
\newcommand{\ups}{{\hs^s\hm}}
\newcommand{\spl}{{\rm spl}}
\newcommand{\fxo}{f_{x_0}}
\newcommand{\ff}{{\rm f}}
\newcommand{\uu}{{\rm u}}
\newcommand{\red}{{\rm red}}
\newcommand{\mmu}{{\rm mod-u}}
\newcommand{\hxx}{h_x}
\newcommand{\fx}{f_x}
\newcommand{\hgg}{h_{\gam,\gam}}
\newcommand{\fg}{f_\gam}
\newcommand{\yo}{{y_1}}
\newcommand{\gz}{g_{01}}
\newcommand{\fxu}{\fx}
\newcommand{\hxu}{\hxx}
\newcommand{\gyo}{g_{y_0}}
\newcommand{\im}{{\rm im\,}}
\newcommand{\cC}{{\scriptscriptstyle \CC}}
\newcommand{\hh}{{\mathfrak h}}
\newcommand{\Stab}{{\rm Stab}}
\newcommand{\SL}{{\rm SL}}
\newcommand{\Oo}{{\mathcal O}}
\newcommand{\GmC}{\G_{m,\C}}
\newcommand{\cc}{\raise 1.7pt \hbox{\Tiny{$\bullet$}}}
\newcommand{\BRD}{{\rm BRD}}
\renewcommand{\gg}{{\mathfrak g}}
\newcommand{\zz}{{\mathfrak z}}
\newcommand{\tl}{{\mathfrak t}}
\newcommand{\nn}{{\langle\nu\rangle}}
\newcommand{\SSS}{{\bf S}}
\begin{document}

\title[Real points in a real homogeneous space]%
{Real points in a homogeneous space\\ of a real algebraic group}

\author{Mikhail Borovoi}

\address{Raymond and Beverly Sackler School of Mathematical Sciences,
       Tel Aviv University, 6997801 Tel Aviv, Israel}

\email{borovoi@tauex.tau.ac.il}

\thanks{The author was partially supported
    by the Israel Science Foundation (grant No. 870/16)}

\keywords{Real homogeneous space, real point, real algebraic group,
          real Galois cohomology, second nonabelian Galois cohomology}

\subjclass{%
14M17
, 14G05
, 11E72
, 20G20
}

\date{\today}

\begin{abstract}
Let $\GG$ be a linear algebraic group over the field of real numbers $\R$,
and let $\YY$ be a right homogeneous space of $\GG$.
We wish to find a real point of $\YY$
or to prove that $\YY$ has no real points.
We describe a method to do that,
implicitly using second nonabelian Galois cohomology.
Our method is suitable for computer-assisted calculations.
\end{abstract}

\maketitle

\tableofcontents

\setcounter{section}{-1}

\section{Introduction}
\label{s:intro}

\begin{subsec}
We denote by $\R$ and $\C$ the fields
of real and complex numbers, respectively.
We write $\Gamma=\Gal(\C/\R)=\{1,\gamma\}$
for the Galois group of $\C$ over $\R$,
where $\gamma$ is the complex conjugation.

Let $\GG$ be a real linear algebraic group,
that is, an affine group scheme of finite type over $\R$.
We say just that $\GG$ is an $\R$-group.
Let $\GG(\R)$ and $\GG(\C)$ denote the groups of real points
and complex points of $\GG$, respectively.
We denote by  $G$
(the same letter, but not boldface)
the base change $G=\GG\times_\R\C$ of $\GG$ from $\R$ to $\C$.
By abuse of notation we identify $G$ with $\GG(\C)$.
In particular, $g\in G$ will mean $g\in\GG(\C)$.
The Galois group $\Gamma$ acts on $G$, that is, $\gamma$
acts on $G$ by an anti-holomorphic involution
\[\tau_\gG\colon G\to G,\quad g\mapsto \upgam g\ \  \text{for}\ g\in G.\]
Moreover, the anti-holomorphic involution $\tau_\gG$ is anti-regular
in the sense of Section \ref{s:anti} below.
Conversely, a pair $(G,\tau)$,
where $G$ is a $\C$-group (a linear algebraic group over $\C$)
and $\tau\colon G\to G$ is an anti-regular involution of $G$,
by Galois descent comes from a unique
(up to a canonical isomorphism) $\R$-group $\GG$;
see  references in Subsection \ref{ss:G-anti} below.
Then by abuse of notation we write $\GG=(G,\tau)$.
\end{subsec}

\begin{subsec}
Let $\YY$ be a right homogeneous space of $\GG$.
This means that $\YY$ is a quasi-projective $\R$-variety
(a reduced quasi-projective scheme of finite type over $\R$),
and if we write $Y=\YY\times_\R\C$
for the base change of $\YY$ from $\R$ to $\C$
and identify $Y$ with the set of $\C$-points $\YY(\C)$,
then we have a  {\em transitive} right $G$-action
on the quasi-projective $\C$-variety $Y$
\[ Y\times G\to Y, \quad (y,g)\mapsto y\cdot g\ \, \text{for}\ y\in Y,\,g\in G,\]
compatible with the $\Gamma$-actions on $G$ and on $Y$.
Here ``compatible'' means  that
\[\upgam\hs(y\cdot g)=\upgam y\cdot\hm\upgam g\quad\text{for all}\ y\in Y,\, g\in G.\]
\end{subsec}

\begin{subsec}
The group of $\R$-points of $\GG$
\[\GG(\R)=\{g\in G\mid \upgam g=g\} \]
naturally acts on the right on the set of $\R$-points of $\YY$
\[\YY(\R)=\{y\in Y\mid \upgam y =y\}, \]
and we wish to compute the orbits of this action
(we call them ``real orbits'').
In other words, we wish to compute the number of real orbits
(which is always finite) and to find representatives of all real orbits.

If we know one real point $y_1\in Y(\R)$,
then we can easily compute representatives of {\em all} real orbits
using first Galois cohomology.
See Serre \cite[Section I.5.4, Corollary 1 of Proposition 36]{Serre},
and also \cite[Proposition 3.6.5]{BGL21}.
\end{subsec}

\begin{subsec}\label{ss:setup}
Now assume that we do not know any real point in $Y$,
and even do not know whether $Y$ has a real ($\gamma$-fixed) point.
We choose once and for all a $\C$-point
$y_0\in Y$ and write $H={\rm Stab}_G(y_0)$.

We have
\[\upgam y_0=y_0\cdot \gyo\]
for some $\gyo\in G$, because $Y$ is a homogeneous space of $G$.
Then we have
\begin{align}
&\gyo\hm \upgam\gyo \in H,\label{e:gy00}\\
&\gyo\hm \upgam a\hs \gyo ^{-1}\in H \quad\text{for all}\ a\in H;\label{e:action-00}
\end{align}
see Subsection \ref{ss:Y-to-G,gyo} below.
Conversely, consider a pair $(H,\gyo)$ where $H\subset G$ is an algebraic $\C$-subgroup
and $\gyo\in G$ is an element. If $(H,\gyo)$ satisfies \eqref{e:gy00} and \eqref{e:action-00},
then it comes from some right homogeneous space $\YY$ of $\GG$ and a point $y_0$ of $Y$.
See Subsection \ref{ss:H,gyo-to-YY} below.
\end{subsec}

We come to our main problem:

\begin{subsec}{\bf Main Problem.}
\label{ss:Main}
For a right homogeneous space $\YY$ of $\GG$
given by a pair $(H,\gyo)$ as above
satisfying \eqref{e:gy00} and \eqref{e:action-00},
determine whether $\YY$ has a real point,
and if yes, give a method of finding such a point,
suitable for computer-assisted calculations.
\end{subsec}

In this article we solve Main Problem \ref{ss:Main}.
The plan of the article will be given in the next section.

\begin{subsec}{\bf Motivation.}
The motivation for Main Problem \ref{ss:Main}
came from our preprint \cite{BGL21},
where we classified $\SL(9,\R)$-orbits
in the space of real trivectors $\bigwedge^3\R^9$.
We used the classification of  $\SL(9,\C)$-orbits
in the space of {\em complex} trivectors $\bigwedge^3\C^9$
due to Vinberg and Elashvili \cite{VE78}.
For any $\SL(9,\C)$-orbit $\Oo\subset \bigwedge^3\C^9$
preserved by the complex conjugation,
we classified $\SL(9,\R)$-orbits in the set of real points
\[\Oo(\R)=\Oo\cap \bigwedge^3\R^9=\{x\in \Oo\mid\upgam x=x\}.\]
Any such orbit $\Oo$ is a left homogeneous space of $\GG=\SL_{9,\R}$.
There were some orbits
preserved by the complex conjugation in which
we could not find any real points by ad hoc methods.
We developed a method of finding a real point using
second Galois cohomology, which permitted us to find
real points in these orbits using computer.
In our examples we had $H^1\hs\GG=\{1\}$
and the stabilizer $H$ was abelian,
and for this reason, in \cite{BGL21}
we developed a method under these two assumptions only.
In the present article we develop a method
of solving Main Problem \ref{ss:Main} in the general case
(without these two assumptions).
We expect to use this method in our future classification projects.
\end{subsec}

\begin{subsec}
{\bf Notation and conventions}\label{not-conv}
\begin{itemize}
\item[\cc] $\Z$ denotes the ring of natural numbers.
\item[\cc] $\Q$, $\R$ and $\C$ denote the fields
           of rational numbers, of real numbers, and of complex numbers, respectively.
\item[\cc] $\Gamma=\Gal(\C/\R)=\{1,\gamma\}$, the Galois group of $\C$ over $\R$,
           where $\gamma$ is the complex conjugation.
\item[\cc] By an algebraic group, we always mean a {\em linear} algebraic group.
\item[\cc] We denote real algebraic groups and real algebraic varieties
           by boldface letters $\GG$, $\HH$, $\YY$, \dots,
           and their complexifications by respective Roman (non-bold) letters
           $G=\GG\times_\R \C$, $H=\HH\times_\R\C$, $Y=\YY\times_\R\C$, \dots
\item[\cc] By a $\C$-group we always mean an {\em algebraic group over $\C$}.
           Similarly, by an $\R$-group we always mean an {\em algebraic group over $\R$}.
\item[\cc] $\Aut H$ denotes the automorphism group of a $\C$-group $H$.
\item[\cc] $\Inn H$ denotes the group of inner automorphisms of $H$.
\item[\cc] $\Out H=\Aut H/\Inn H$,  the group of outer automorphisms of $H$.
\item[\cc] $\SAut H$ denotes the group of {\em semi-linear} automorphisms
           of a $\C$-group $H$, that is, of regular automorphisms and anti-regular ones;
           see Subsection \ref{ss:SAut-SOut} below.
\item[\cc] $\SOut H=\SAut H/\Inn H$.

\item[\cc] $\inn(g)\colon x\mapsto gxg^{-1}$ denotes the inner automorphism
           of a group $G$ corresponding to an element $g$ of $G$.
\item[\cc] $Z(\GG)$ denotes the center of $\GG$.
\item[\cc] $\X^*(T)=\Hom(T,\GmC)$, the character group of a torus $T$,
            where $\GmC$ denotes the multiplicative group over $\C$.
\item[\cc] $\X_*(T)=\Hom(\GmC,T)$, the cocharacter group of a torus $T$.
\item[\cc] By an involution (of an algebraic group, of a variety, etc.) we mean
    an automorphism (regular or anti-regular) with square identity.
\item[\cc] When $\GG$ is an $\R$-group,
           $Z^1\hs\GG=\{z\in G\mid z\cdot\hm\upgam z=1\}$, \,\\
           $B^1\hs\GG=\{g^{-1}\cdot\hm\upgam g\mid g\in G\}$, \,
           $H^1\hs\GG:=H^1(\R,\GG)$.
\item[\cc] When $\CC$ is a commutative $\R$-group,
           $Z^2\hs\CC=\CC(\R)$, \,$B^2\hs\CC=\{c\cdot\hm\upgam c\mid c\in C\}$, \,
           $H^2\hs\CC:=H^2(\R,\CC)=Z^2\hs\CC/B^2\hs\CC$.
\item[\cc] $[d]$ denotes the cohomology class of a cocycle $z$.
\end{itemize}
\end{subsec}

\section{Plan of solving Main Problem \ref{ss:Main}}
\label{s:Plan}

\begin{subsec}\label{ss:Y-to-G,gyo}
Let $\GG$, $\YY$, $y_0\in Y$, $H=\Stab_G(y_0)\subset G$,
and $\gyo\in G$ be as in the introduction.
In particular, we have
\[\upgam y_0=y_0\cdot \gyo \hs.\]
We have
\[\upgam\hs(\upgam y_0)=y_0\hs,\]
hence
\[y_0=\upgam\hs(\upgam y_0)=\upgam\hs(y_0\cdot \gyo )=\upgam y_0\cdot\hm\upgam \gyo
           =y_0\cdot \gyo \cdot\hm\upgam \gyo \hs,\]
whence $\gyo\hs\upgam \gyo\in H$, which gives \eqref{e:gy00}.

If $\gyo' =a \gyo $ for some $a\in H$, then
\[\upgam y_0=y_0\cdot \gyo' \hs,\]
whence
\[\gyo' \cdot\hm\upgam \gyo' \in H,\]
that is,
\begin{align*}
&a\hs \gyo\hm\hm\upgam a \upgam \gyo \in H,\\
&a\cdot \gyo\hm\hm \upgam a\hs \gyo ^{-1}\cdot \gyo\hm\hm \upgam \gyo \in H.
\end{align*}
Since $a\in H$ and $\gyo\hm\hm \upgam \gyo \in H$,
we obtain that
\begin{equation*}
\gyo\hm\hm \upgam a\hs \gyo ^{-1}\in H,
\end{equation*}
which gives \eqref{e:action-00}.
Thus from a right homogeneous space $\YY$ of $\GG$
we obtain a pair $(H,\gyo)$
satisfying \eqref{e:gy00} and \eqref{e:action-00}.
\end{subsec}

\begin{subsec}\label{ss:H,gyo-to-YY}
Conversely, if we have a pair $(H,\gyo)$ as above
satisfying \eqref{e:gy00} and \eqref{e:action-00},
we may set $Y=H\backslash G$ and define
an anti-holomorphic automorphism $\tau_\yY$ of $Y$ by
\[\tau_\yY(Hg)=H \gyo\cdot\hm\upgam g\quad\text{for}\ g\in G.\]
If we take $g'=ag$ for some $a\in H$,
then we obtain
\[\tau_\yY(Hg')=H\cdot \gyo\cdot\hm\upgam (ag)
         =H\cdot \gyo\hm\hm\upgam a\hs \gyo^{-1}\cdot\gyo\hm\hm \upgam g
         =H \gyo\cdot\hm\upgam g=\tau_\yY(Hg)\]
because $\gyo\hm\hm\upgam a\hs \gyo^{-1}\in H$ by \eqref{e:action-00},
 whence we see that $\tau_\yY$ is well defined.
Moreover, for any $g''\in G$ we clearly have
\begin{equation}\label{e:compatible}
\tau_\yY(Hg\cdot g'')=\tau_\yY(Hg)\cdot\hm\upgam g''.
\end{equation}
Finally,
\[\tau_\yY^2(H\cdot 1)=\tau_\yY(H\cdot \gyo)=H\cdot \gyo\cdot\hm\upgam\gyo=H\cdot 1\]
because $\gyo\hs\upgam\gyo \in H$ by \eqref{e:gy00},
and since the automorphism $\tau_\yY$ satisfies \eqref{e:compatible},
we conclude that $\tau_\yY^2=\id_Y$.
It remains to note that the anti-holomorphic involution $\tau_\yY$ of $Y$
is anti-regular in a suitable sense,
and by Galois descent it defines a real form $\YY$ of $Y$
(we do not give details because we shall not use this fact).
By abuse of notation we write $\YY=(Y,\tau_\yY)$.
\end{subsec}

We wish to find an $\R$-point $\yo\in \YY(\R)$
or to show that there is no such point.

\begin{proposition}\label{p:R-point}
A $\C$-point $\yo=y_0\cdot\gz^{-1}\in Y$ with $\gz\in G$
is an $\R$-point if and only if
\begin{equation}\label{e:g01}
\gz^{-1}\cdot\hm\upgam\gz\in H \gyo \hs.
\end{equation}
\end{proposition}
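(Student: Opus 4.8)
The plan is to unwind the definition of an $\R$-point directly. By definition $\YY(\R)=\{y\in Y\mid \upgam y=y\}$, so I must determine precisely when $\upgam\yo=\yo$ for the point $\yo=y_0\cdot\gz^{-1}$, and show this happens exactly when \eqref{e:g01} holds.

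First I would compute $\upgam\yo$ using the compatibility of the $\Gamma$-action with the $G$-action, the fact that $\tau_\gG$ is a group automorphism, and the defining relation $\upgam y_0=y_0\cdot\gyo$:
\[
\upgam\yo=\upgam\hs(y_0\cdot\gz^{-1})=\upgam y_0\cdot(\upgam\gz)^{-1}
 =y_0\cdot\gyo\cdot(\upgam\gz)^{-1}.
\]
Next, since $Y$ is a right homogeneous space of $G$ with $\Stab_G(y_0)=H$, I would identify $Y$ with $H\backslash G$ via $y_0\cdot g\leftrightarrow Hg$; then $y_0\cdot g=y_0\cdot g'$ if and only if $g\hs(g')^{-1}\in H$. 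Applying this with $g=\gyo\cdot(\upgam\gz)^{-1}$ and $g'=\gz^{-1}$, the equality $\upgam\yo=\yo$ becomes equivalent to
\[
\gyo\cdot(\upgam\gz)^{-1}\cdot\gz\in H.
\]

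Finally, I would rewrite this membership condition: taking inverses and using that $H$ is a subgroup, so that $H^{-1}=H$, the displayed condition is equivalent to $\gz^{-1}\cdot\upgam\gz\cdot\gyo^{-1}\in H$, that is, to $\gz^{-1}\cdot\upgam\gz\in H\gyo$, which is precisely \eqref{e:g01}. Since each step above is a logical equivalence, this proves the proposition.

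This is a short computation rather than a deep argument, so I do not expect a genuine obstacle. The only point requiring care is the bookkeeping of left versus right cosets — the homogeneous space is a \emph{right} one while $H$ acts on the left in the identification $Y\cong H\backslash G$, and the element $\gz$ enters through $\gz^{-1}$ — so I would double-check each coset manipulation and each passage to inverses to make sure the side on which $H$ and $\gyo$ appear comes out as stated in \eqref{e:g01}.
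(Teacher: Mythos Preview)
Your proof is correct and follows essentially the same approach as the paper's: both compute $\upgam\yo = y_0\cdot\gyo\cdot\upgam\gz^{-1}$ and compare with $\yo = y_0\cdot\gz^{-1}$ using that $\Stab_G(y_0)=H$. The only cosmetic difference is that you handle both implications at once via a chain of equivalences (using the stabilizer criterion $y_0\cdot g = y_0\cdot g' \Leftrightarrow g(g')^{-1}\in H$ and then taking inverses), whereas the paper treats the two directions separately.
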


\begin{proof}
Assume that $\yo\in\YY(\R)$, that is,  $\upgam\yo=\yo$.
Then
\[y_0\cdot\gz^{-1}=\yo=\upgam \yo =\upgam\hs(y_0\cdot\gz^{-1})
     =\upgam y_0\cdot\hm\upgam\gz^{-1}=y_0\cdot \gyo \cdot\hm\upgam\gz^{-1}\hs,\]
whence
\[y_0\cdot\gz^{-1}\cdot\hm\upgam\gz=y_0\cdot \gyo \hs,\]
which implies \eqref{e:g01}.

Conversely, assume that \eqref{e:g01} holds.
Write
\[\gz^{-1}\cdot\hm\upgam\gz= h \gyo \quad\text{with } h\in H.\]
Then
\begin{align*}
\upgam \yo=\upgam\hs(y_0\cdot\gz^{-1})
    &=\upgam y_0\cdot\hm\upgam \gz^{-1}
   =y_0\cdot \gyo \cdot\hm\upgam \gz^{-1}\\
   &=y_0\cdot h\cdot \gyo \cdot\hm\upgam \gz^{-1}
   =y_0\cdot \gz^{-1}\cdot\hm\upgam\gz \cdot\hm\upgam \gz^{-1}
   =y_0\cdot \gz^{-1}=\yo.
\end{align*}
Thus $\yo\in \YY(\R)$.
\end{proof}

\begin{proposition}
Assume that $\yo=y_0\cdot \gz^{-1}$ is an $\R$-point,
so that \eqref{e:g01} holds.
Set
\begin{equation}\label{e:d}
z=\gz^{-1}\cdot\hm\upgam \gz.
\end{equation}
Then
\begin{equation}\label{e:Hg-gam}
z\in H\gyo\hs.
\end{equation}
Moreover,
\begin{align}
&z\in B^1\hs\GG\subset Z^1\GG.\label{e:coboundary}
\end{align}
\end{proposition}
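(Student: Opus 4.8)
The plan is to verify the two assertions separately; both are essentially formal consequences of the definitions, so the argument will be short.

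For \eqref{e:Hg-gam}: by hypothesis \eqref{e:g01} holds, i.e.\ $\gz^{-1}\cdot\hm\upgam\gz\in H\gyo$, and by the definition \eqref{e:d} this element is precisely $z$. Hence \eqref{e:Hg-gam} is immediate and needs no further argument.

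For \eqref{e:coboundary}: recall from Subsection \ref{not-conv} that $B^1\hs\GG=\{g^{-1}\cdot\hm\upgam g\mid g\in G\}$ and $Z^1\hs\GG=\{z\in G\mid z\cdot\hm\upgam z=1\}$. Taking $g=\gz$ in the description of $B^1\hs\GG$ and comparing with \eqref{e:d}, I see at once that $z\in B^1\hs\GG$. It then remains to check the inclusion $B^1\hs\GG\subseteq Z^1\hs\GG$. For this I would take an arbitrary element $g^{-1}\cdot\hm\upgam g$ with $g\in G$ and compute
\[
(g^{-1}\cdot\hm\upgam g)\cdot\hm\upgam(g^{-1}\cdot\hm\upgam g)
   =g^{-1}\cdot\hm\upgam g\cdot\hm\upgam(g^{-1})\cdot\hm\upgam(\upgam g).
\]
Since $\tau_\gG$ is a group automorphism, $\upgam(g^{-1})=(\upgam g)^{-1}$, and since it is an involution, $\upgam(\upgam g)=g$; so the two middle factors cancel and the product equals $g^{-1}\cdot g=1$. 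Applying this to $g=\gz$ gives $z\cdot\hm\upgam z=1$, i.e.\ $z\in Z^1\hs\GG$, which completes the chain $z\in B^1\hs\GG\subseteq Z^1\hs\GG$.

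There is no genuine obstacle here: the statement is a bookkeeping exercise, and the only facts used are that $z$ has already been written in the form $\gz^{-1}\cdot\hm\upgam\gz$ (together with hypothesis \eqref{e:g01}) and that $\tau_\gG$ is an anti-regular \emph{involution}, hence a group homomorphism of order dividing $2$. I would simply assemble these observations in the order above.
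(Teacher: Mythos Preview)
Your proof is correct and follows essentially the same approach as the paper's: both assertions are immediate from the definitions, with \eqref{e:Hg-gam} coming from \eqref{e:g01} and \eqref{e:coboundary} from \eqref{e:d}. You have simply unpacked the verification of $B^1\hs\GG\subseteq Z^1\hs\GG$ that the paper leaves implicit.
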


\begin{proof}
The assertion \eqref{e:Hg-gam} follows from \eqref{e:g01},
and the assertion 
\eqref{e:coboundary} follows from \eqref{e:d}.
\end{proof}

\begin{subsec}
Now our problem of finding an $\R$-point of $\YY$
decomposes into two subproblems:
\begin{enumerate}\item[]
\begin{enumerate}\item[]
\begin{enumerate}
 \item[{\bf Problem A.}] Try to find $z\in H\gyo\cap Z^1\hs\GG$.
 \item[{\bf Problem B.}] Try to find $h\in H$ such that if we write  $z'=hz$, then \\
          $z'\in H\gyo\hs\cap\hs B^1\hs\GG$.
\end{enumerate}
\end{enumerate}
\end{enumerate}
Accordingly, our method of finding
an $\R$-point of $\YY$ consists of two steps:
in Step A we try to solve Problem A,
and in Step B we try to solve Problem B.

If we succeed to perform Steps A and B,
then we obtain $z'\in H\gyo$ such that
$z'=\gz^{-1}\cdot \hm\upgam \gz$ for some $\gz\in G$,
and we obtain a point $y_1:=y_0\cdot \gz^{-1}$,
which is real  by Proposition \ref{p:R-point}.
If not, then we conclude that $\YY$ has no $\R$-points.

In the rest of the article,
we describe our method of finding an $\R$-point of $\YY$.
First we describe Step B.
Concerning Step A, we construct an extension of abstract groups
\[ 1\to H\to E\to \Gamma\to 1,\]
where $H$ denotes the group of $\C$-points $H(\C)$, with the property
that the  elements of the preimage $E_\gam$ of $\gam$ in $E$,
when acting on $H$ by conjugation,
act by {\em anti-regular} automorphisms;
see Section \ref{s:anti} below.
Such extensions are related to the second nonabelian
Galois cohomology with coefficients in $H$.
In Step A we try to solve the following Problem A$'$:
find an element $x_\spl\in E_\gam$ such that $x_\spl^2=1$.
We divide Step A  into three substeps: A.1, A.2, and A.3.
In Step A.1 we consider the case when $H$ is finite,
in Step A.2 we consider the case when $H$ is connected reductive,
and in Step A.3 we consider the case when $H$ is unipotent.
The most difficult is Step A.2; we describe it in the last section.
\end{subsec}

\begin{subsec} The plan of the rest of the article is as follows.
In Section \ref{s:anti} we discuss the notion
of an anti-regular automorphism.
In Section \ref{s:B} we describe our method of solving Problem B
assuming that Problem A has been already solved.
In Section \ref{s:extensions} we state our Problem A
in terms of splittings of a certain extension.
In Section \ref{s:H2} we give the definition
of nonabelian $H^2$ over $\R$ following Springer \cite{Springer},
both in terms of extensions and in terms of cocycles.
In Section \ref{s:A'} we divide solving Problem A\hm$'$
(that includes Problem A) into three substeps: A.1, A.2, and A.3,
and describe our methods for Steps A.1 and A.3.
In Section \ref{s:A2}, the most complicated in the article,
we describe our method for Step A.2.
In Appendix \ref{App:PR-fundam} we prove Lemma \ref{t:fundam}
describing the structure of a fundamental torus
of a simply connected semisimple $\R$-group.
\end{subsec}

\section{Anti-regular maps}
\label{s:anti}

Here we discuss anti-regular maps of complex affine varieties
and anti-regular automorphisms of complex  algebraic groups.
For the notion of a semi-linear morphism of schemes over an arbitrary field
see \cite{Borovoi20}.
In this section,  we write $X(\C)$ (and not just $X$)
for the set of $\C$-points of a $\C$-variety $X$.

\begin{definition}
Let $X$ and $Y$ be affine varieties over $\C$.
We say that a map on $\C$-points
$\varphi\colon X(\C)\to Y(\C)$ is {\em regular},
if it comes from a morphism of varieties $X\to Y$.
We say that $\varphi\colon X(\C)\to Y(\C)$ is {\em anti-regular},
if for any regular function $f\colon Y(\C)\to\C$,
its inverse image $\varphi^*\hm f\colon X(\C)\to \C$ given by
\[ (\varphi^*\hm f)(x)=\overline{f(\varphi(x))} \]
is regular, where the bar denotes the complex conjugation.
\end{definition}

\begin{lemma}[easy]
\label{l:reg-anti-reg}
\begin{enumerate}
\item[(i)] The composition of two regular maps,
and the composition of two anti-regular maps, are regular.
\item[(ii)] The composition of a regular map and an anti-regular map,
in any order, is anti-regular.
\end{enumerate}
\end{lemma}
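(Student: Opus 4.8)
The plan is to translate both statements into the language of regular functions and then just keep track of complex conjugations. The one non-formal input I would invoke at the outset is the standard characterization of regular maps: for reduced affine $\C$-varieties of finite type, a set-theoretic map $f\colon X(\C)\to Y(\C)$ comes from a morphism of varieties --- i.e.\ is regular in the sense above --- if and only if $g\circ f$ is a regular function on $X(\C)$ for every regular function $g$ on $Y(\C)$. (This is the anti-equivalence $X\mapsto\calO(X)$ between affine $\C$-varieties and reduced finitely generated $\C$-algebras, together with the density of $\C$-points.) I would state this with a reference and then use it freely. I would also record the reformulation, immediate from the definition above, that $f$ is anti-regular if and only if $g\circ f$ is, for every regular $g$ on $Y(\C)$, the complex conjugate of a regular function on $X(\C)$; call such functions anti-regular functions, and note that complex conjugation interchanges the regular and the anti-regular functions.

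The composition of two regular maps needs no argument: a composition of morphisms of varieties is a morphism. For the remaining cases I would fix maps $\varphi\colon X(\C)\to Y(\C)$ and $\psi\colon Y(\C)\to Z(\C)$ and an arbitrary regular function $g$ on $Z(\C)$, and record the bookkeeping rules I shall use: precomposing a regular function with a regular map keeps it regular, precomposing a regular function with an anti-regular map turns it into an anti-regular function, and complex conjugation commutes with precomposition, $\overline{h}\circ\varphi=\overline{h\circ\varphi}$.

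The three remaining cases are then one-line conjugation chases applied to $g\circ\psi\circ\varphi$. If $\varphi$ is anti-regular and $\psi$ is regular, then $g\circ\psi$ is regular, hence $g\circ\psi\circ\varphi$ is an anti-regular function, so $\psi\circ\varphi$ is anti-regular. If $\varphi$ is regular and $\psi$ is anti-regular, write $g\circ\psi=\overline{h}$ with $h$ regular on $Y(\C)$; then $g\circ\psi\circ\varphi=\overline{h\circ\varphi}$ with $h\circ\varphi$ regular, so $g\circ\psi\circ\varphi$ is an anti-regular function and $\psi\circ\varphi$ is anti-regular. Finally, if both $\varphi$ and $\psi$ are anti-regular, write $g\circ\psi=\overline{h}$ as before; now $h\circ\varphi$ is an anti-regular function, say $h\circ\varphi=\overline{k}$ with $k$ regular, so $g\circ\psi\circ\varphi=\overline{\overline{k}}=k$ is regular, and since $g$ was arbitrary the characterization from the first paragraph shows that $\psi\circ\varphi$ is regular.

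I do not expect a genuine obstacle --- this is why the lemma is labelled ``easy''. The only points needing attention are the directions of the conjugations and, in the case of two anti-regular maps, remembering to invoke the function-pullback characterization of regularity, since that is the one case where the composition lands back among the regular maps rather than the anti-regular ones.
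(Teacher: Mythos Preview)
Your proof is correct. The paper itself gives no proof at all for this lemma --- it is simply labelled ``easy'' and left to the reader --- so there is nothing to compare against; your conjugation bookkeeping via the pullback characterization of regular maps is exactly the expected verification.
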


\begin{remark}
Any anti-regular map is anti-holomorphic,
but there exist anti-holomorphic maps that are not anti-regular.
For example, the homomorphism of real Lie groups
\[\G_a(\C)=\C\,\longrightarrow\,\C^\times=\G_m(\C),\quad x\mapsto \exp\,\ov x\]
is anti-holomorphic, but not anti-regular.
\end{remark}

\begin{subsec}
Let $\XX$ be a {\em real} affine variety.
In the coordinate language, the reader may regard $\XX$
as an algebraic subset in $\C^n$ (for some positive integer $n$)
defined by polynomial equations with {\em real} coefficients.
More conceptually, the reader may assume that $\XX$
is a reduced affine scheme of finite type over $\R$.
With any of these two equivalent definitions,
$\XX$ defines a covariant functor
\[A\rightsquigarrow \XX(A)\]
from the category of commutative unital $\R$-algebras to the category of sets.
Applying this functor to the $\R$-algebra $\C$ and the morphism of $\R$-algebras
\[\gamma\colon \C\to \C,\quad z\mapsto\ov z\ \ \text{for}\ z\in\C,\]
we obtain a set (a complex analytic space) $\XX(\C)=X(\C)$ together with a map
\begin{equation}\label{e:tau-X}
\tau_\xX=\XX(\gamma)\hs\colon\hs X(\C)\to X(\C),
\end{equation}
where we write $X=\XX\times_\R\C$, the base change of $\XX$ from $\R$ to $\C$.
By functoriality we have $\tau_\xX^2=\id$,
and by Lemma \ref{l:anti} below the map $\tau_\xX$ is anti-regular.
We say that $\tau_\xX$ is an {\em anti-regular involution} of $X$.
\end{subsec}

\begin{lemma}\label{l:anti}
For a real affine variety $\XX$,
the map \eqref{e:tau-X} defined above is anti-regular.
\end{lemma}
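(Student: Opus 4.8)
The plan is to pass to explicit affine coordinates and then verify anti-regularity by a one-line polynomial computation. First I would choose finitely many elements $x_1,\dots,x_n$ generating the $\R$-algebra $\R[\XX]$ (which is finitely generated since $\XX$ is of finite type over $\R$); they determine a closed embedding of $\R$-varieties $\XX\into\A^n_\R$, and after base change a closed embedding $X=\XX\ti_\R\C\into\A^n_\C$ cut out by the same (real-coefficient) polynomials. Thus $X(\C)$ is identified with the set of tuples $a=(a_1,\dots,a_n)\in\C^n$ annihilated by those polynomials, the point $a$ corresponding to the $\R$-algebra homomorphism $\phi_a\colon\R[\XX]\to\C$ with $\phi_a(x_i)=a_i$.

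The key step is to unwind the functorial definition \eqref{e:tau-X}: since $\tau_\xX=\XX(\gamma)$ and $\gamma\colon\C\to\C$ is $z\mapsto\overline z$, the point $\tau_\xX(a)$ corresponds to the homomorphism $\gamma\circ\phi_a$, which sends $x_i\mapsto\overline{a_i}$. Hence $\tau_\xX$ is nothing but coordinate-wise complex conjugation, $\tau_\xX(a)=\overline a:=(\overline{a_1},\dots,\overline{a_n})$; this is well defined because any polynomial $g$ with real coefficients satisfies $g(\overline a)=\overline{g(a)}$, so $g(a)=0$ implies $g(\overline a)=0$. Now let $f$ be a regular function on $X(\C)$, say the restriction of a polynomial $P=\sum_\alpha c_\alpha x^\alpha$ with $c_\alpha\in\C$. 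Using that complex conjugation is a ring automorphism of $\C$ of order two we get
\[
(\tau_\xX^{*}f)(a)=\overline{f(\tau_\xX(a))}=\overline{P(\overline a)}
=\sum_\alpha\overline{c_\alpha}\,a^{\alpha}=\overline P(a),
\]
where $\overline P:=\sum_\alpha\overline{c_\alpha}\,x^\alpha$. Therefore $\tau_\xX^{*}f$ is the restriction to $X(\C)$ of the polynomial $\overline P$, hence is regular, and by the definition of an anti-regular map this proves the lemma. Intrinsically, writing $\C[X]=\R[\XX]\otimes_\R\C$, the same computation reads $\tau_\xX^{*}f=(\id_{\R[\XX]}\otimes\gamma)(f)\in\C[X]$.

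I do not expect a genuine obstacle here: this is a routine bookkeeping lemma, in the spirit of Lemma \ref{l:reg-anti-reg}. The only point that needs a little care is the identification of the functorially defined involution $\tau_\xX=\XX(\gamma)$ with coordinate-wise conjugation, together with the trivial observation that conjugation maps $X(\C)$ into itself exactly because $\XX$ is defined over $\R$; once this is in place the verification is immediate.
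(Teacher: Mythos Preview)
Your proof is correct and follows essentially the same route as the paper: embed $\XX$ in affine space $\A^n_\R$, identify $\tau_\xX$ with coordinate-wise complex conjugation, and observe that $\tau_\xX^*f$ is the restriction of the polynomial $\overline P$ with conjugated coefficients. The paper's argument is a terser version of yours; your added intrinsic description $\tau_\xX^*f=(\id_{\R[\XX]}\otimes\gamma)(f)$ is a nice touch but not present there.
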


\begin{proof}
We may and shall assume that $\XX$
is embedded into the real affine space $\A_\R^n$
for some positive integer $n$.
For a complex point $x\in X(\C)\subset \C^n$
with coordinates $(x_1,\dots,x_n)$,
the point $\tau_\xX(x)$ has coordinates $(\ov x_1,\dots,\ov x_n)$.
A regular function $f$ on $X$ is  the restriction to $X$ of a polynomial $P$
in the coordinates $x_i$ with certain coefficients $c_\alpha$.
An easy calculation shows that $\tau_\xX^*f$ is the restriction to $X$
of the complex conjugate polynomial $\upgam P$
(the polynomial with coefficients $\upgam c_\alpha$),
hence a regular function on $X$.
It follows that the map $\tau_\xX$ is anti-regular, as required.
\end{proof}

\begin{subsec}\label{ss:G-anti}
Let $\GG$ be a real algebraic group.
As above, it defines a complex algebraic group $G=\GG\times_\R\C$
and an anti-regular  group automorphism
\[\tau_\gG=\GG(\gam)\hs\colon\hs G(\C)\to G(\C)\]
such that $\tau_\gG^2=\id$;
see, for instance, \cite[Section 1.1]{BT21}.
We say that $\tau_\gG$ is an {\em anti-regular involution} of $G$.
Thus from $\GG$ we obtain a pair $(G,\tau_\gG)$.

Conversely, by Galois descent any pair $(G,\tau)$, where $G$
is a {\em complex} algebraic group
and $\tau\colon G(\C)\to G(\C)$ is an anti-regular involution of $G$,
comes from a unique (up to a canonical isomorphism)
{\em real} algebraic group $\GG$;
see Serre \cite[V.4.20,  Corollary 2 of Proposition 12]{Serre-AGCF},
or the book ``N\'eron models'' \cite[Section 6.2, Example B]{BLR},
or Jahnel \cite[Theorem 2.2]{Jahnel}.
We shall not use this fact.
For us, a real algebraic group is a pair $(G,\tau)$ as above,
and we write $\GG=(G,\tau)$.
We say that the real algebraic group $(G,\tau)$
is a {\em real form} of the complex algebraic group $G$.

Note that if $G$ is reductive 
or unipotent,
then any anti-holomorphic involution of $G$ is anti-regular;
see Cornulier  \cite{Cornulier}.
The hypothesis that $G$ is either reductive or unipotent, is necessary:
the commutative algebraic group $\G_{a,\C}\times\G_{m,\C}=\C\times\C^\times$
has the  anti-holomorphic involution
$(z,w)\mapsto (\ov{z},\exp(i\ov{z})\ov{w})$
that is not anti-regular.
\end{subsec}

\section{Step B of solving Main Problem \ref{ss:Main}}
\label{s:B}

\begin{subsec}
We describe Step B.
Assume that we have already found $z\in H\gyo$
such that $z\in Z^1\hs\GG$, that is,
\[z\cdot\hm\upgam z=1.\]
We wish to find $z'\in H \gyo $ such that
\begin{align}
&z'\cdot\hm\upgam z'=1\quad\text{and} \label{e:z'}\\
&[z']=1\in H^1\hs\GG, \notag 
\end{align}
that is,
\[z'\in B^1\hs\GG.\]
We may write
\begin{equation}\label{e:hz}
z'=hz\quad \text{for some}\ h\in H.
\end{equation}
Then \eqref{e:z'} gives
\[h\hs z\hs\hs\upgam h\upgam z=1,\]
whence
\begin{equation}\label{e:d-H}
h\cdot z\hs\hs\upgam h\hs\hs z^{-1}=1
\end{equation}
(because $\upgam z=z^{-1}$).

Consider the anti-regular automorphism
\[\varphi_z\colon G\to G,\quad g\mapsto z\cdot\hm\upgam g \cdot z^{-1}.\]
We consider the twisted $\R$-group $_z\GG=(G,\varphi_z)$.
Since $z\in H\gyo$, we have $\varphi_z(H)=H$; see \eqref{e:action-00}.
Consider the automorphism
\[\varphi_z|_H\colon H\to H.\]
By abuse of notation we denote by $_z\HH$ the pair $(H,\varphi_z|_H)$
(we abuse the notation because there is no $\HH$, only $_z \HH$).
Then we have an inclusion of $\R$-groups $i\colon _z\HH\into\hs_z\GG$.
\end{subsec}

\begin{proposition}
Consider the composite map on cocycles
\begin{equation}\label{e:composite-cocycles}
t_z\circ i\colon Z^1\hm_z\HH\to
                Z^1\hm_z\GG\to Z^1\hs\GG,\quad h\mapsto h\mapsto hz
\end{equation}
and the induced composite map on cohomology
\begin{equation}\label{e:composite}
\mathcal{T}_z\circ i_*\colon  H^1\hm_z\HH\to
             H^1\hm_z\GG\to H^1\hs\GG,\quad [h]\mapsto [h]\mapsto [hz].
\end{equation}
There exists $z'\in H\gyo\cap B^1\hs\GG$
if and only if the image of the map \eqref{e:composite}
contains the neutral element $[1]\in H^1\hs\GG$.
\end{proposition}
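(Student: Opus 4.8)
The plan is to unwind the definitions and track the element $z'$ through the twisting construction that has just been set up. Recall that by Step B's setup we have already fixed $z \in H\gyo \cap Z^1\GG$, and we seek $z' \in H\gyo \cap B^1\GG$. By \eqref{e:hz} every element of $H\gyo$ with the $z$-translate normalization can be written $z' = hz$ with $h \in H$, and conversely every such $hz$ lies in $H\gyo$ since $z \in H\gyo$. So the real content is to characterize, among all $h \in H$, those for which $hz \in Z^1\GG$ and $[hz] = [1] \in H^1\GG$.

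First I would verify that the condition $hz \in Z^1\GG$ is exactly the condition that $h$ be a cocycle for the twisted group ${}_z\HH = (H, \varphi_z|_H)$. Indeed, $hz \in Z^1\GG$ means $(hz)\cdot{}^\gamma(hz) = 1$, which using ${}^\gamma z = z^{-1}$ rearranges to $h \cdot \varphi_z(h) = h \cdot z\,{}^\gamma h\, z^{-1} = 1$, i.e. $h \in Z^1({}_z\HH)$; this is precisely the computation \eqref{e:d-H} already recorded in the excerpt, read in both directions. Thus the map $h \mapsto hz$ restricts to a well-defined map $Z^1({}_z\HH) \to Z^1\GG$, and composing with $i\colon {}_z\HH \hookrightarrow {}_z\GG$ we recover \eqref{e:composite-cocycles}: on ${}_z\GG$-cocycles the twisting-translation bijection $t_z\colon Z^1({}_z\GG) \to Z^1\GG$ sends $g \mapsto gz$, and $i$ sends a cocycle of ${}_z\HH$ to the same element viewed in ${}_z\GG$.

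Next I would check that $t_z$ descends to the standard twisting bijection $\mathcal{T}_z\colon H^1({}_z\GG) \xrightarrow{\sim} H^1\GG$ on cohomology — this is the classical torsion/twisting isomorphism (Serre, \cite[I.5.3]{Serre}), and under it the neutral class $[1] \in H^1({}_z\GG)$ goes to $[z] \in H^1\GG$. Hence the composite \eqref{e:composite} sends $[h] \mapsto [hz]$ as stated. Now assemble: an element $z' \in H\gyo \cap B^1\GG$ exists iff there is $h \in H$ with $hz \in Z^1\GG$ and $[hz] = [1] \in H^1\GG$; by the previous paragraph the first condition says $h \in Z^1({}_z\HH)$, and then $[hz] = (\mathcal{T}_z \circ i_*)([h])$, so the second condition says $[1] \in H^1\GG$ lies in the image of $\mathcal{T}_z \circ i_*$ restricted to classes coming from ${}_z\HH$. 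Conversely, if $[1]$ is in the image of \eqref{e:composite}, pick $[h] \in H^1({}_z\HH)$ mapping to it; a representative cocycle $h \in Z^1({}_z\HH)$ gives $z' = hz \in H\gyo \cap Z^1\GG$ with $[z'] = [1]$, hence $z' \in B^1\GG$. This proves the equivalence.

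The only genuinely delicate point is the compatibility of the naive translation map $t_z$ on cocycles with the abstract twisting isomorphism on $H^1$ and the fact that it carries $[1]$ to $[z]$; everything else is bookkeeping with \eqref{e:action-00} and \eqref{e:hz}. I would therefore spend most of the write-up making that compatibility explicit — in particular recalling that two cocycles $h_1, h_2 \in Z^1({}_z\HH)$ are cohomologous iff $h_1 z$ and $h_2 z$ are cohomologous in $Z^1\GG$, which follows by a direct substitution using that conjugation in the twisted group is $\varphi_z$-conjugation — and the rest of the argument is a short chain of "if and only if"s as above.
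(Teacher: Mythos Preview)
Your proposal is correct and follows essentially the same approach as the paper's proof: both directions unwind the composite map by writing $z'=hz$, identifying the condition $hz\in Z^1\GG$ with $h\in Z^1({}_z\HH)$ via \eqref{e:d-H}, and then reading off the equivalence. The paper is terser, simply citing Serre \cite[I.5.3, Proposition 35 bis]{Serre} for the twisting map $\mathcal{T}_z$, whereas you spell out the cocycle-level compatibility of $t_z$ with cohomology; but the logical skeleton is identical.
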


\begin{proof}
Assume that there exists  $z'\in H\gyo\cap B^1\hs\GG$.
Then  $z'\in H\gyo\cap Z^1\hs\GG$.
Write $z'=hz$ for some $h\in H$.
Formulas \eqref{e:hz} and \eqref{e:d-H}
mean that the cocycle  $z'$
is contained in the image of the  composite map \eqref{e:composite-cocycles},
and hence the cohomology class $[z']$
is contained in the image of the  composite map \eqref{e:composite};
see Serre \cite[Section I.5.3, Proposition 35 bis]{Serre}
for a description of the twisting  map $\mathcal{T}_z$.
Since $z'\in B^1\hs\GG$, we have $[z']=[1]\in H^1\hs\GG$,
and we conclude that the image of the map \eqref{e:composite}
contains $[1]$, as required.

Conversely, assume that the image
of the map \eqref{e:composite} contains $[1]$.
Then there exists $z'\in B^1\hs\GG$ such that
$z'=hz$ for some $h\in H$, where $z\in H\gyo$.
It follows that  $z'\in H\gyo\cap B^1\hs\GG$, as required.
\end{proof}

\begin{subsec}
We can explicitly compute the finite set $H^1\hs\GG$
in the following sense.
We can find a (finite) set of cocycles $g_i\in Z^1\hs\GG$
representing all cohomology classes,
and for any cocycle $z\in Z^1\hs\GG$ we can determine,
to which of the cocycles $g_i$ it is cohomologous, and what is the element $b\in G$
such that $b^{-1}\cdot g_i\cdot\hm\upgam b=z$.
In particular, for any $z\in Z^1\hs\GG$ we can determine
whether $z\in B^1\hs\GG$, and if yes, then
we can find an element $\gz\in G$ such that $z=\gz^{-1}\hs\upgam\gz$.
Similarly, we can explicitly compute
the finite set $H^1\hs_z\HH$, in particular,
we can find a finite set of cocycles $h_j\in  Z^1{}_z\HH$
representing all cohomology classes.
\end{subsec}

\begin{subsec}{\bf Method of calculation. }
For each cocycle $h_j\in  Z^1{}_z\HH$ as above,
we compute $z_j:= h_j\cdot z\in Z^1\hs\GG$
and determine whether $z_j\in B^1\hs\GG$.
If for some $j$ we have $z_j\in B^1\hs\GG$,
then we write $z_j=\gz^{-1}\hs\upgam\gz$,
and we obtain an $\R$-point $y_1:=y_0\cdot \gz^{-1}\in\YY(\R)$.
If for all $j$ we have $z_j\notin B^1\hs\GG$,
then we conclude that $\YY$ has no $\R$-points.
\end{subsec}

\section{Extensions}
\label{s:extensions}

\begin{subsec}
We restate Problem A in terms of splittings of a certain extension.
Consider the semidirect product $G\rtimes \Gamma$ with multiplication law
\begin{equation}\label{e:mult}
(g,s)\cdot(g',s')=(g\cdot \ups g',ss')\quad\text{for }g,g'\in G,\ s,s'\in\Gamma.
\end{equation}
The group  $G\rtimes \Gamma$ acts on $Y$ on the left by
\[(g,s)\cdot y= \ups y\cdot g^{-1}\quad\text{for }g\in G,\ s\in\Gamma,\ y\in Y.\]
Let $E$ denote the stabilizer of $y_0$ in  $G\rtimes \Gamma$, that is,
\[E=\{(g,s)\in G\rtimes \Gamma\mid \ups y_0=y_0\cdot g\}.\]
Then $E$ is a group with multiplication law \eqref{e:mult}.
We have a canonical surjective homomorphism
\[\pi\colon E\to\Gamma,\quad (g,s)\mapsto s\]
with kernel
\[\{(h,1)\mid h\in H\}.\]
We identify $H$ with this kernel via the embedding
\[\iota\colon H\into E,\quad h\mapsto (h,1).\]
We obtain a group extension
\begin{equation}\label{e:extension}
1\to H\labelto \iota E\labelto \pi\Gamma\to 1.
\end{equation}
We write $E_\gam=\pi^{-1}(\gam)$.
The group $E$ acts on $H$ by conjugation.
\end{subsec}

\begin{lemma}
The conjugation action of $E$ on $H\cong\iota(H)$
has the following property:
\begin{equation}\label{p-ty:action}
\text{The elements of $E_\gam$ act on $H$
by {\emm anti-regular} automorphisms.}
\end{equation}
\end{lemma}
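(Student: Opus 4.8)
The plan is to compute the conjugation action of an element of $E_\gam$ on $H$ explicitly and recognize it as a composition of maps already known to be anti-regular. Take $(g,\gam)\in E_\gam$, so by definition $\upgam y_0=y_0\cdot g$; in particular $g$ plays the role of $\gyo$, and \eqref{e:action-00} tells us that conjugation by $(g,\gam)$ preserves $H$, so the map is at least well defined as an automorphism of $H$. For $h\in H\cong\iota(H)$, using the multiplication law \eqref{e:mult} I would compute
\[(g,\gam)\cdot(h,1)\cdot(g,\gam)^{-1}=(g\cdot\upgam h,\gam)\cdot(g,\gam)^{-1},\]
and since $(g,\gam)^{-1}=(\upgam(g^{-1}),\gam)=({}^{\gam}\!(g^{-1}),\gam)$, this equals $(g\cdot\upgam h\cdot\upgam({}^{\gam}\!(g^{-1})),1)=(g\cdot\upgam h\cdot g^{-1},1)$. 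Hence, under the identification $\iota$, the conjugation action of $(g,\gam)$ on $H$ is
\[h\longmapsto g\cdot\upgam h\cdot g^{-1}.\]

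Now I would argue that this map is anti-regular. It factors as the composition of three maps on $\C$-points: the anti-regular involution $\tau_\gG=\GG(\gam)\colon G(\C)\to G(\C)$ (which is anti-regular by Lemma \ref{l:anti}, or rather by the discussion in Subsection \ref{ss:G-anti}), restricted to $H$ — note $\tau_\gG(H)=H$ need not hold on the nose, but the composite does land in $H$ by \eqref{e:action-00} — followed by the regular map $g'\mapsto g\cdot g'\cdot g^{-1}$, i.e. the inner automorphism $\inn(g)$ of $G$. The composition of an anti-regular map with a regular map is anti-regular by Lemma \ref{l:reg-anti-reg}(ii). Since the image lies in $H$ and $H$ is a closed subvariety of $G$, restricting the target to $H$ keeps the map anti-regular (pulling back a regular function on $H$ to one on $G$ and then along an anti-regular map). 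This gives \eqref{p-ty:action}.

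The only genuinely delicate point is the bookkeeping around well-definedness: $\tau_\gG$ alone does not map $H$ into $H$ (that is the whole reason $\gyo$ enters the picture), so I would phrase the argument as: the composite $h\mapsto g\cdot{}^{\gam}h\cdot g^{-1}$ is a priori an anti-regular map $H(\C)\to G(\C)$ by Lemma \ref{l:reg-anti-reg}, and \eqref{e:action-00} guarantees its image is contained in $H(\C)$, so it is an anti-regular map $H(\C)\to H(\C)$; that it is a group automorphism is immediate since conjugation in $E$ by a fixed element is an automorphism of the normal subgroup $\iota(H)$. I do not anticipate a real obstacle here — the content is entirely in the identity $(g,\gam)(h,1)(g,\gam)^{-1}=(g\cdot{}^{\gam}h\cdot g^{-1},1)$ plus the closure properties of anti-regular maps from Section \ref{s:anti}.
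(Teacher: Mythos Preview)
Your proof is correct and follows essentially the same route as the paper: both compute the conjugation explicitly to get $h\mapsto g\cdot\upgam h\cdot g^{-1}$, then factor this map through the inclusion $H\hookrightarrow G$, the anti-regular involution $\tau_\gG$ on $G$, and the regular inner automorphism $\inn(g)$ on $G$, invoking Lemma \ref{l:reg-anti-reg}(ii). The paper handles the ``delicate point'' you flag by observing that every regular function on $H$ is the restriction of one on $G$, so it suffices to check anti-regularity of the composite as a map $H\to G$; your phrasing of this step is equivalent.
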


\begin{proof}
Let $x=(g,\gam)\in E_\gam$.
An easy calculation shows that $x^{-1}=(\upgam g^{-1},\gam)$.
We calculate:
\[x\cdot\iota(h)\cdot x^{-1}=(g,\gam)\cdot(h,1)
        \cdot(\upgam g^{-1},\gam)=(g\upgam h g^{-1}, 1).\]
We must show that the map
\[h\mapsto g\upgam h g^{-1}\colon\ H\to H\]
is anti-regular.
Since any regular function on $H$
is the restriction of some regular function on $G$,
it suffices to show that the map
\begin{equation}\label{e:123}
h\mapsto g\upgam h g^{-1}\colon\ H\to G
\end{equation}
is anti-regular.
But this last  map is a composition of the following three maps:
\begin{align*}
h\mapsto h\colon &H\into G;\\
g'\mapsto\upgam g'\colon &G\to G;\\
g'\mapsto g\hs g'g^{-1}\colon &G\to G,
\end{align*}
of which the second map is anti-regular by Lemma \ref{l:anti},
while the first one and the third one are clearly regular.
By Lemma \ref{l:reg-anti-reg}(ii) the map \eqref{e:123} is anti-regular, as required.
\end{proof}

\begin{definition}
Let $x\in G\rtimes \Gam$.
We say that $x$ is a {\em splitting element of $E$}
if $x\in E_\gam$ and $x^2=1$.
\end{definition}

Note that a splitting element $x$ defines a {\em splitting}
of the extension \eqref{e:extension},
that is, a homomorphism $\sigma\colon \Gam\to E$
such that $\pi\circ\sigma=\id_\Gam$.
Namely, we set $\sigma(1)=1_E,\ \sigma(\gam)=x$.
Conversely, a splitting $\sigma$ of \eqref{e:extension}
gives a splitting element $x=\sigma(\gam)$.

\begin{lemma}\label{l:d-gam-splitting}
Let $x=(z,\gam)\in G\rtimes \Gam$.
Then $x$ is a splitting element of $E$ if and only if
$z$ is a solution of Problem A, that is,
$z\in H\gyo\cap Z^1\hs\GG$.
\end{lemma}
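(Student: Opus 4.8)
The plan is to unwind the definition of a splitting element into two separate conditions on $z$ and to check that, taken together, they are precisely the requirement $z\in H\gyo\cap Z^1\hs\GG$ of Problem~A. Recall that $x=(z,\gam)$ is a splitting element of $E$ if and only if $x\in E_\gam$ and $x^2=1$; I would treat these two requirements in turn.

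First I would analyze the condition $x\in E_\gam$. Since $E_\gam=\pi^{-1}(\gam)$ and the second component of $x=(z,\gam)$ is $\gam$, we have $\pi(x)=\gam$ whenever $x\in E$, so $x\in E_\gam$ is equivalent to $x\in E$, i.e.\ to $(z,\gam)$ lying in the stabilizer of $y_0$ for the left action of $G\rtimes\Gam$ on $Y$. By the definition of $E$ this means $\upgam y_0=y_0\cdot z$. Since $\upgam y_0=y_0\cdot\gyo$, this is equivalent to $y_0\cdot z=y_0\cdot\gyo$, hence to $y_0\cdot(z\hs\gyo^{-1})=y_0$, i.e.\ to $z\hs\gyo^{-1}\in\Stab_G(y_0)=H$, i.e.\ to $z\in H\gyo$. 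Thus $x\in E_\gam$ if and only if $z\in H\gyo$.

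Next I would compute $x^2$ directly from the multiplication law \eqref{e:mult}:
\[
x^2=(z,\gam)\cdot(z,\gam)=(z\cdot\upgam z,\,\gam^2)=(z\cdot\upgam z,\,1).
\]
Hence $x^2=1$ in $G\rtimes\Gam$ if and only if $z\cdot\upgam z=1$, that is, if and only if $z\in Z^1\hs\GG$. Combining the two previous paragraphs, $x=(z,\gam)$ is a splitting element of $E$ if and only if $z\in H\gyo$ and $z\in Z^1\hs\GG$, i.e.\ $z\in H\gyo\cap Z^1\hs\GG$, which is exactly the condition that $z$ solves Problem~A.

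There is essentially no obstacle here; the proof consists of the two short computations above. The only point that needs care is bookkeeping of conventions: the $G$-action on $Y$ is on the right while $G\rtimes\Gam$ acts on the left, so one must be careful on which side to cancel when passing from $\upgam y_0=y_0\cdot z$ to $z\in H\gyo$. Once that is done, the statement drops out.
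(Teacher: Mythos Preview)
Your proof is correct and follows essentially the same approach as the paper: you separate the condition ``$x$ is a splitting element'' into $x\in E_\gam$ and $x^2=1$, and verify that the former is equivalent to $z\in H\gyo$ while the latter is equivalent to $z\in Z^1\hs\GG$. The paper's proof is identical in structure, only terser---it asserts the first equivalence in one line without spelling out the stabilizer computation you gave.
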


\begin{proof}
The assertion $x\in E_\gam$ means that $z\in H\gyo$\hs.
We have
\[x^2=(z,\gam)^2=(z\cdot\hm\upgam z,\,1).\]
Thus the equality $x^2=1$ means that $z\cdot\hm\upgam z=1$, that is,
$z\in Z^1\hs\GG$.
\end{proof}

\section{Nonabelian $H^2$}
\label{s:H2}

\begin{subsec}\label{ss:SAut-SOut}
We explain the notion of nonabelian $H^2$ in Galois cohomology over $\R$.
We follow Springer \cite[1.13--1.17 and 2.1--2.6]{Springer}.

Let $H$ be an algebraic $\C$-group.
We denote by $\SAut(H)$ the group of semi-linear automorphisms of $H$,
that is, of regular automorphisms and anti-regular automorphisms.
We have a natural homomorphism
\begin{equation}\label{e:SAut-G}
\SAut(H)\to\Gamma
\end{equation}
sending the regular automorphisms to 1
and sending the anti-regular ones to $\gamma$.
We assume that the homomorphism \eqref{e:SAut-G} is surjective, that is,
$H$ admits an anti-regular automorphism.
Consider the group of inner automorphisms of $H$:
\[\Inn(H)\subset\Aut(H)\subset\SAut(H).\]
For $\phi\in \SAut(H)$ and $h\in H$ we have
\[\phi\circ\inn(h)\circ \phi^{-1}=\inn(\phi(h)),\]
and hence  $\Inn(H)$ is a normal subgroup of $\SAut(H)$.
We set
\[\SOut(H)=\SAut(H)/\Inn(H).\]
The surjective homomorphism \eqref{e:SAut-G}
induces a short exact sequence
\begin{equation}\label{e:SOut-G}
1\to \Out(H)\to \SOut(H)\labelto\varsigma\Gamma\to 1,
\end{equation}
where $\Out(H):=\Aut(H)/\Inn(H)$ is the group of outer automorphisms of $H$.

\begin{definition}\label{d:R-kernel}
An $\R$-kernel in a  $\C$-group $H$
is a splitting of the extension \eqref{e:SOut-G},
that is, a homomorphism $\kappa\colon \Gamma\to \SOut(H)$
such that $\varsigma\circ\kappa=\id_\Gamma$.
\end{definition}

For a given $\C$-group $H$, we consider extensions
\begin{equation}\label{e:E0}
1\to H\to E\to\Gamma\to 1
\end{equation}
with property \eqref{p-ty:action},
where we write $H$ for the group of $\C$-points $H(\C)$.
From such an extension we obtain a homomorphism
\[E\to \SAut(H),\quad  x \mapsto\inn(x)|_H\hs\]
sending $H$ to $\Inn(H)$ and thus inducing an $\R$-kernel in $H$
\[\kappa\colon \Gamma\to \SOut(H),\]
which we call the {\em $\R$-kernel in $H$  associated with $E$.}

\begin{definition}[{Springer \cite[Definition 1.14]{Springer}}]
\label{d:H2}
For a given complex algebraic group $H$ and an $\R$-kernel $\kappa$ in $H$,
we denote by $H^2(H,\kappa)$ the set of isomorphism classes
of group extensions \eqref{e:E0} with property \eqref{p-ty:action}
whose associated $\R$-kernel is $\kappa$.
\end{definition}
\end{subsec}

We pass to the cocyclic description of nonabelian $H^2$.

\begin{construction}\label{con:ext-to-cocycles}
Let \eqref{e:E0} be an extension as in Definition \ref{d:H2}.
Let  $x\in E_\gam:=\pi^{-1}(\gam)\subset E$.
We set
\[\fx=\inn(x)|_H\in \SAut(H),\quad \hxx=x^2\in H.\]
Then
\begin{align*}
& \fx^2:=\inn(x^2)=\inn(\hxx), \\
& \fx(\hxx)=x\cdot x^2\cdot x^{-1}=x^2=\hxx\hs, \\
& \fx\cdot\Inn(H)=\kappa(\gam).
\end{align*}
If instead of $x\in E_\gam$ we choose another element $x'=ax$ with $a\in H$
and set  $\fx'=\inn(x')\in\SAut(H)$, $\hxx'=(x')^2\in H$,
then we obtain
\begin{align*}
&\fx'=\inn(ax)=\inn(a)\circ\inn(x)=\inn(a)\circ \fx,\\
&\hxx'=(ax)^2=axax=a\cdot xax^{-1}\cdot x^2=a\cdot \fx(a)\cdot \hxx\hs.
\end{align*}
\end{construction}

\begin{definition}
For a given $\R$-kernel $\kappa$ for $H$,
the set of 2-{\em cocycles} $Z^2(H,\kappa)$
is the set of pairs $(f,h)$,
where $f\in\SAut(H)$ is an anti-regular automorphism,
$h\in H$, and the following 2-cocycle conditions are satisfied:
\begin{enumerate}
\item[(i)] $f^2=\inn(h)$;
\item[(ii)] $f(h)=h$;
\item[(iii)] $f\cdot\Inn(H)=\kappa(\gamma)$.
\end{enumerate}
We say that two 2-cocycles $(f,h)$ and $(f',h')$ in $Z^2(H,\kappa)$
are {\em equivalent},
and write $(f,h)\sim(f',h')$, if
there exists $a\in H$ such that
\begin{equation*}
f'=\inn(a)\circ f,\quad  h'=a\cdot f(a)\cdot h.
\end{equation*}
\end{definition}

We have seen that Construction \ref{con:ext-to-cocycles} gives a map $(E,x)\mapsto(\fx,\hxx)$
from the set of isomorphism classes of pairs $(E,x)$
with associated $\R$-kernel $\kappa$
to the set of cocycles $Z^2(H,\kappa)$,
which induces a map from $H^2(H,\kappa)$ to $Z^2(H,\kappa)/\sim$\hs.

\begin{proposition}  [see Mac Lane {\cite[Lemma IV.8.2]{MacLane}}\hs]
The map of construction \ref{con:ext-to-cocycles} induces a bijection
$H^2(H,\kappa)\isoto Z^2(H,\kappa)/\sim$\hs.
\end{proposition}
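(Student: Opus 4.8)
The plan is to verify that Construction \ref{con:ext-to-cocycles} yields a well-defined map $H^2(H,\kappa)\to Z^2(H,\kappa)/\mathord\sim$ and then to exhibit an inverse, the whole argument being a nonabelian refinement of the classical correspondence between group extensions and factor sets (Mac Lane \cite[Lemma IV.8.2]{MacLane}). First I would check well-definedness on the level of cocycles: given an extension $E$ as in Definition \ref{d:H2} and a choice of $x\in E_\gam$, the pair $(\fx,\hxx)=(\inn(x)|_H,x^2)$ satisfies conditions (i)--(iii) of the definition of $Z^2(H,\kappa)$ --- this is exactly the three displayed identities already recorded in Construction \ref{con:ext-to-cocycles}, together with property \eqref{p-ty:action} guaranteeing that $\fx$ is anti-regular. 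The computation in the construction for a second choice $x'=ax$ shows $(\fx',\hxx')\sim(\fx,\hxx)$, so the class in $Z^2(H,\kappa)/\mathord\sim$ is independent of $x$. Finally, if $\beta\colon E\to E'$ is an isomorphism of extensions (compatible with the maps to $H$ and to $\Gamma$), then $\beta(x)\in E'_\gam$ and $(\ff_{\beta(x)},\rH_{\beta(x)})=(\fx,\hxx)$, so isomorphic extensions give the same class; hence the map descends to $H^2(H,\kappa)\to Z^2(H,\kappa)/\mathord\sim$.

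Next I would construct the inverse map. Given a $2$-cocycle $(f,h)\in Z^2(H,\kappa)$, I would build an extension by taking the set $E=H\sqcup H\hs x$ --- a disjoint union of two copies of $H$, the second copy formally multiplied by a symbol $x$ --- with multiplication determined by the rules $x\hs a\hs x^{-1}=f(a)$ for $a\in H$ and $x^2=h$; concretely, $(a)(b)=(ab)$, $(a)(bx)=(ab\hs x)$, $(ax)(b)=(a\hs f(b)\hs x)$, and $(ax)(bx)=(a\hs f(b)\hs h)$. Associativity of this multiplication is where conditions (i) and (ii) are used: condition (i), $f^2=\inn(h)$, makes the products involving $x^3$ consistent, and condition (ii), $f(h)=h$, handles $x^4$; I expect this associativity verification --- checking the several cases of the associativity law --- to be the main obstacle, though it is the same bookkeeping as in the classical (abelian-kernel) case and is entirely routine. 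The projection $\pi\colon E\to\Gamma$ sending $H$ to $1$ and $H\hs x$ to $\gam$ is then a homomorphism with kernel $H$, giving an extension \eqref{e:E0}; property \eqref{p-ty:action} holds because $\inn(x)|_H=f$ is anti-regular by hypothesis, and condition (iii) says its associated $\R$-kernel is $\kappa$.

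It then remains to check that the two constructions are mutually inverse up to the respective equivalence relations. Starting from $(f,h)$, building $E$ as above, and choosing the distinguished element $x$, Construction \ref{con:ext-to-cocycles} returns exactly $(\inn(x)|_H,x^2)=(f,h)$, so one composite is the identity on $Z^2(H,\kappa)$ (hence on the quotient). Conversely, starting from an extension $E$ with a chosen $x\in E_\gam$, forming $(\fx,\hxx)$ and then rebuilding the extension $E'=H\sqcup H\hs x'$ with $x'^2=\hxx$ and $\inn(x')|_H=\fx$, one gets an isomorphism $E'\isoto E$ by $(a)\mapsto a$, $(ax')\mapsto a\hs x$; this is a bijection since $E=H\hs\sqcup\hs H\hs x$ (as $\pi$ has kernel $H$ and $x\in E_\gam$), and it is multiplicative precisely because $x$ satisfies the same two structural relations. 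Therefore the other composite is the identity on $H^2(H,\kappa)$, and the map of Construction \ref{con:ext-to-cocycles} is a bijection $H^2(H,\kappa)\isoto Z^2(H,\kappa)/\mathord\sim$, as claimed. Since all the maps in sight involve only the group structures (the algebraic-group structure enters solely through the words ``regular'' and ``anti-regular'', which are preserved throughout), this reduces cleanly to the cited lemma of Mac Lane, and I would simply invoke \cite[Lemma IV.8.2]{MacLane} for the purely group-theoretic core rather than reproving it.
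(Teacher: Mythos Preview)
Your proposal is correct and follows essentially the same approach as the paper: both construct the inverse map explicitly by building an extension from a cocycle $(f,h)$ via the set $H\times\Gamma$ (equivalently $H\sqcup Hx$) with multiplication determined by $f$ and $h$. Your account is in fact more detailed than the paper's, which is labeled ``Idea of proof'' and only writes down the inverse map without discussing well-definedness or the verification that the composites are identities.
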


\begin{proof}[Idea of proof]
We describe the inverse map to the map of Construction  \ref{con:ext-to-cocycles}.
For a 2-cocycle $(f,h)\in Z^2(H,\kappa)$, we set
\begin{gather*}f_1=\id_H\hs, \quad \fg=f\in \SAut(H),\\
h_{1,1}=h_{\gam,1}=h_{1,\gam}=1,\quad \hgg=h\in H.
\end{gather*}
We obtain maps
\begin{align*}
s\mapsto f_s\colon \,\Gamma\to \SAut(H), \qquad
           (s,s')\mapsto h_{s,s'}\colon\,\Gam\times\Gam\to H.
\end{align*}
To the equivalence class of our  2-cocycle  $(f,h)\in Z^2(H,\kappa)$
we associate the isomorphism class of the group extension
\[1\to H\labelto\iota E\labelto{\pi}\Gamma\to 1,\]
where $E=H\times\Gam$ with the multiplication law
\[(h,s)\cdot(h',s')=(h\cdot f_s(h')\cdot h_{s,s'}\hs, \,  s s')\]
and the  homomorphisms $\iota$ and $\pi$ are the obvious ones:
\begin{align*}
&\iota\colon H\into E,\quad h\mapsto (h,1),\\
&\pi\colon E\onto\Gamma,\quad (h,s)\mapsto s.\qedhere
\end{align*}
\end{proof}

\begin{remark}
We shall not use $\R$-kernels and $H^2(H,\kappa)$ in this article,
but we shall use the relation between the isomorphism classes of extensions
and the equivalence classes of 2-cocycles.
\end{remark}

\section{Problem A$'$, Steps A.1 and A.3}
\label{s:A'}

We consider the following problem:

\begin{subsec}
{\bf Problem A\hm$\boldsymbol{'}$.}
Let $H$ be a $\C$-group,  let
\begin{equation}\label{e:E}
1\to H\to E\to\Gamma\to 1\tag{$E$}
\end{equation}
be a group extension with property \eqref{p-ty:action},
and let $(f,h)$ be a corresponding 2-cocycle.
We wish to find a splitting element of $E$ or to show
that there are no splitting elements in $E$.
Recall that a splitting element of $E$ is
an element $x\in E_\gam$ such that $x^2=1$.
\end{subsec}

\begin{subsec}\label{ss:HH0}
By Lemma \ref{l:d-gam-splitting},
our Problem A for the homogeneous space $\YY$
is Problem A\hm$'$ for the extension  \eqref{e:extension}
corresponding to $\YY$.

We need only one splitting element of $E$ for Step B.
However, in order to find a  splitting element by d\'evissage,
we need {\em all} splitting elements of $E$.
We explain how to find all splitting elements of $E$
after we have found one splitting element $x_0$.
We explain also how to find all conjugacy classes
of splitting elements of $E$.

Let $x_0\in E_\gamma$ be a splitting element.
It defines a 2-cocycle $(\fxo ,1)$, where $\fxo=\inn(x_0)|_H$\hs.
We have $\fxo ^2=\inn(x_0^2)=\id_H$.
Thus we obtain a real form $\HH_0=(H,\fxo)$ of $H$.
\end{subsec}

\begin{proposition}\label{p:conjugacy}
For $x_0$ and $\HH_0$ as in \ref{ss:HH0}, the map
\[ H\to E, \quad a\mapsto ax_0\]
induces a bijection between $Z^1\hs\hs\HH_0$
and the set of splitting elements of $E$,
which in turn induces a bijection between $H^1\hs\hs \HH_0$
and the set of conjugacy classes of splitting elements of $E$.
\end{proposition}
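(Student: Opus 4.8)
The plan is to analyze the map $a\mapsto ax_0$ directly, using the fact that $x_0$ is a fixed splitting element, so that every element of $E_\gam$ is uniquely of the form $ax_0$ with $a\in H$. First I would compute, for $a\in H$, when $ax_0$ is again a splitting element: since $\inn(x_0)|_H=\fxo$ and $x_0^2=1$, Construction~\ref{con:ext-to-cocycles} gives $(ax_0)^2=a\cdot\fxo(a)\cdot x_0^2=a\cdot\fxo(a)$. Hence $ax_0$ is a splitting element if and only if $a\cdot\fxo(a)=1$, which is exactly the condition $a\in Z^1\hs\HH_0$ for the real form $\HH_0=(H,\fxo)$ (recall $Z^1\hs\GG=\{z\mid z\cdot{}\upgam z=1\}$, and here $\fxo$ plays the role of the Galois action). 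Since $a\mapsto ax_0$ is visibly injective and $E_\gam=\{ax_0\mid a\in H\}$, this already yields the asserted bijection between $Z^1\hs\HH_0$ and the set of splitting elements of $E$.

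For the second bijection I would translate conjugacy of splitting elements under $E$ into the twisting relation on $Z^1\hs\HH_0$. Two splitting elements $ax_0$ and $a'x_0$ are conjugate in $E$ iff there exists $y\in E$ with $y(ax_0)y^{-1}=a'x_0$. Writing $y=bx_0^{\varepsilon}$ with $b\in H$ and $\varepsilon\in\{0,1\}$, and using the semidirect-product multiplication together with $\inn(x_0)|_H=\fxo$, a short calculation reduces the conjugacy condition to an equation of the form $a'=b\cdot a\cdot\fxo(b)^{-1}$ (the case $\varepsilon=1$ producing, after using $x_0^2=1$, the same set of relations, since $x_0$ itself normalizes things compatibly). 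This is precisely the coboundary/cohomology equivalence relation defining $H^1\hs\HH_0=Z^1\hs\HH_0/\!\sim$. Therefore the bijection of the first part descends to a bijection between $H^1\hs\HH_0$ and the set of $E$-conjugacy classes of splitting elements, which is the claim.

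The routine but slightly fiddly part — and the step I expect to be the main obstacle — is the bookkeeping in the conjugacy computation: one must carry the element $y\in E$ through the multiplication law, keep track of whether $y$ lies over $1$ or over $\gam$, and verify that both cases produce exactly the same equivalence relation on $Z^1\hs\HH_0$ (in particular that conjugation by an element of $E_\gam$ does not enlarge the relation beyond the standard twisted-coboundary relation). Once one observes that conjugation by $x_0$ acts on $b\in H$ by $\fxo$ and that $x_0^2=1$, everything collapses to the familiar formula, but the sign/order conventions in $G\rtimes\Gamma$ require care. No deep input is needed beyond Construction~\ref{con:ext-to-cocycles} and the definitions of $Z^1$ and $H^1$ recalled in Subsection~\ref{not-conv}.
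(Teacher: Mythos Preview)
Your proposal is correct and follows essentially the same approach as the paper for the first bijection. For the second bijection there is a minor but worthwhile difference: you split into the cases $y\in H$ and $y\in E_\gam$ (your $\varepsilon=0,1$) and then argue, somewhat informally, that the $\varepsilon=1$ case produces no new relations. The paper sidesteps this case analysis with a one-line observation: since $E$ is generated by $H$ together with the particular splitting element $x=ax_0$ being conjugated (rather than the fixed basepoint $x_0$), and since $x$ obviously commutes with itself, the $E$-conjugacy class of $x$ equals its $H$-conjugacy class. Only conjugation by $b\in H$ then needs to be computed, giving $b^{-1}(ax_0)b=b^{-1}a\,\fxo(b)\cdot x_0$ directly. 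This trick eliminates precisely the ``fiddly'' bookkeeping you flag as the main obstacle; your route works too, but the paper's choice of generator makes the verification that $E_\gam$-conjugation adds nothing automatic.
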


\begin{proof}
Let $x\in E_\gamma$. Write $x=ax_0$ with $a\in H$.
Then
\[x^2=(ax_0)^2=a\cdot x_0 a x_0^{-1}=a\cdot \fxo(a).\]
Thus $x^2=1$ if and only if $a\in Z^1\hs\hs\HH_0$\hs.

Let $x\in  E_\gam$.
Since the group $E$ is generated by $x$ and $H$,
we see that the $E$-conjugacy class of $x$ coincides
with the $H$-conjugacy class of $x$.
Write $x=ax_0$ and $x'=a'x_0$ with $a,a'\in Z^1\hs\hs\HH$.
If $x'$ is $H$-conjugate to $x$, that is,
$x'=b^{-1}x\hs b$ for some $b\in H$, then
\[x'=b^{-1}ax_0b=b^{-1} a\hs \fxo (b)\cdot x_0\hs,\]
whence $a'=b^{-1} a\hs \fxo (b)$.
In other words $a'\sim a$ in $Z^1\hs\hs\HH_0$.
Conversely, if $a'\sim a$ in $Z^1\hs\hs\HH_0$, that is,
$a'=b^{-1} a\hs \fxo (b)$ for some $b\in H$, then
\[x'=a'x_0=b^{-1} a\hs \fxo (b)\cdot x_0=b^{-1}ax_0b=b^{-1}xb,\]
whence $x'$ is conjugate to $x$.
\end{proof}

In order to divide Problem $\rm A'$ into subproblems,
we introduce certain subquotients of $H$ and $E$.

\begin{notation}
For a $\C$-group $H$:

\begin{itemize}
\item[\cc] $H^\circ$ denotes the identity component of $H$,
   which is a connected $\C$-group;
\item[\cc] $H^\ff =H/H^\circ$, which is a finite group;
\item[\cc] $H^\uu=R_u(H^\circ)$, the unipotent radical of $H^\circ$,
   which is a unipotent $\C$-group;
\item[\cc] $H^\mmu=H/H^\uu$, which is a reductive $\C$-group, not necessarily connected;
\item[\cc] $H^\red=H^\circ/H^\uu$, which is a connected reductive $\C$-group;
\item[\cc] $H^\sss=(H^\red,H^\red)$, the commutator subgroup of $H^\red$,
   which is a semisimple $\C$-group;
\item[\cc] $H^\ssc$ is the universal cover of $H^\sss$,
   which is a simply connected semisimple $\C$-group;
\item[\cc] $\rho\colon H^\ssc\onto H^\sss\into H^\red$ is the composite homomorphism,
   which in general is neither injective nor surjective.
\end{itemize}
\end{notation}

\begin{subsec}
Note that $H^\uu$ and $H^\circ$ are characteristic subgroups of $H$,
and hence they are normal in $E$.
By taking quotients by $H^\uu$ and $H^\circ$,
we obtain the following extensions
from our extension \eqref{e:E} of Problem A\hm$'$\hs:
the extension
\begin{equation}\label{e:E-mmu}
1\to H^\mmu\to E^\mmu\to\Gamma\to 1,\tag{$E^\mmu$}
\end{equation}
where $E^\mmu=E/H^\uu$, and the extension
\begin{equation}\label{e:E-ff}
1\to H^\ff \to E^\ff \to\Gamma\to 1,\tag{$E^\ff $}
\end{equation}
where $E^\ff =E/H^\circ$.
We plan to try to find a splitting element of $E^\ff$,
then to try to lift it to a splitting element of $E^\mmu$,
and then to lift it to a splitting element of $E$.
\end{subsec}

\begin{subsec}\label{ss:Plan-A'}
We wish to solve Problem  A\hm$'$,
that is, to find a splitting element $x$ of $E$.
If $x_\spl$ is a splitting element of $E$,
then its image $x_\spl^\ff\in E^\ff$
is a splitting element of $E^\ff$.
Therefore, we are going first to find all splitting elements of $E^\ff$,
and after that to try to lift them to splitting elements of $E$.
Note that if $x^\ff_1$ and $x^\ff_2$ are two {\em conjugate}
splitting elements of $E^\ff$, and $x^\ff_1$
admits a lifting to a splitting element of $E$, then so does $x^\ff_2$.
Therefore, when trying to find a splitting element of $E^\ff$
that can be lifted to a splitting element of $E$,
it suffices to check one representative
in each conjugacy class of splitting elements of $E^\ff$.
\end{subsec}

\begin{subsec}
{\bf Step A.1: Splitting elements of $E^\ff $: finite algebraic groups.}
The group $H^\ff$ is finite, hence set $E^\ff_\gam$ is finite as well.
We try to find a splitting element
$x^\ff$ of $E^\ff$ by brute force, that is,
by squaring all elements of $E^\ff_\gam$\hs.
If there is no splitting elements in $E^\ff$,
we conclude that $E$ has no splitting elements either,
and hence $\YY$ has no real points.
If we find a splitting element $x^\ff_0$ of  $E^\ff $,
then we obtain a real form $\HH_0^\ff$ of $H^\ff$.
We compute $H^1\hs\hs\HH^\ff _0$ and choose representatives
$a_0^\ff =1,a_1^\ff ,\dots,a_{n^\ff}^\ff \in H^\ff $
of all cohomology classes.
We set $x_i^\ff =a_i^\ff \cdot x_0^\ff $ for $i=0,1,\dots,n^\ff$.
By Proposition \ref{p:conjugacy},
the obtained elements $x_i^\ff$ are representatives
of all conjugacy classes of splitting elements of $E^\ff$.
(Alternatively, we can find such representatives $x_i^\ff$ by brute force.)
\end{subsec}

\begin{subsec}
{\bf Step A.2: Lifting a splitting element to $E^\mmu$: connected reductive groups.}\,
For each $i=0,1,\dots, n^\ff$ we consider the preimage $E^\circ_i$ in $E$
of the subgroup $\{1,x^\ff_i\}\subset E^\ff$.
Then we obtain an extension
\begin{equation}\label{e:E-circ-i}
1\to H^\circ\to E^\circ_i\to\Gamma\to 1,\tag{$E^\circ_i$}
\end{equation}
The subgroup $H^\uu$ is normal in $E$ and hence in $E^\circ_i$.
We set $E^\red_i=E^\circ_i/H^\uu$ and obtain an extension
\begin{equation}\label{e:E-red-i}
1\to H^\red\to E^\red_i\to\Gamma\to 1,\tag{$E^\red_i$}
\end{equation}

For each $i=0,1,\dots,n^\ff$ we try to find
a splitting element  $x^\red_i$ of  $E^\red_i$.
This is a rather complicated calculation,
which we shall describe in Section \ref{s:A2} below.

If we can find such $x^\red_i$ for some $i$,
then $x^\mmu:=x^\red_i$ is a splitting element of $E^\mmu$,
and we pass to Step A.3.

If there is no such $x^\red_i$ in  $E^\red_i$ for all $i=0,1,\dots,n^\ff$,
then we conclude that  $E^\mmu$ and $E$ have no splitting elements,
and $\YY$ has no real points.
\end{subsec}

\begin{subsec}
{\bf Step A.3: Lifting a splitting element to $E$: unipotent groups.}
Assume that we already have a splitting element $x^\mmu\in E^\mmu_\gamma$,
and we wish to lift it to a splitting element $x_\spl\in E_\gamma$.

Let $E^\uu$ denote the preimage in $E$
of the subgroup  $\{1,x^\mmu\}\subset E$.
Then we obtain an extension
\begin{equation}\label{e:E-u}
1\to H^\uu\to E^\uu\to\Gamma\to 1,\tag{$E^\uu$}
\end{equation}
We wish to find a splitting element $x_\spl$ in $E^\uu$.
\end{subsec}

\begin{theorem}[\hs{Douai \cite[Theorem IV.1.3]{Douai}}\hs]
\label{t:Douai}
For any extension with property \eqref{p-ty:action}
\begin{equation*}\label{e:u}
1\to H\to E\to\Gamma\to 1,
\end{equation*}
where  $H$ is a {\emm unipotent} $\C$-group,
there exists a splitting element in $E$,
and all splitting elements are conjugate.
\end{theorem}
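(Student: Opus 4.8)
The plan is to argue by d\'evissage on $\dim H$, peeling off a central vector subgroup at each step; both the existence of a splitting element and uniqueness up to conjugacy come out of this induction, the latter with the help of Proposition~\ref{p:conjugacy}.

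First I would record the needed structure of $H$. In characteristic $0$ a unipotent $\C$-group is connected and nilpotent, so for $H\neq 1$ its center $Z:=Z(H)$ is a nontrivial connected commutative unipotent $\C$-group, i.e.\ a vector group $\cong\G_a^n$; being the center, $Z$ is characteristic in $H$, hence normal in $E$, and $\dim(H/Z)<\dim H$. One checks that the quotient extension $1\to H/Z\to E/Z\to\Gamma\to 1$ again has property~\eqref{p-ty:action}, because an anti-regular automorphism of the affine group $H$ stabilizing $Z$ descends to an anti-regular automorphism of $H/Z$ (the regular functions on $H/Z$ being the $Z$-invariant regular functions on $H$, as in Lemma~\ref{l:anti}). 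Then I induct on $\dim H$: the case $H=1$ is trivial (take $x=\gam$); in general, by the inductive hypothesis $E/Z$ has a splitting element $\bar x$, which I lift to some $x\in E_\gam$, so that automatically $x^2\in\ker(E\to E/Z)=Z$. With $\sigma:=\inn(x)|_Z$ — an anti-regular automorphism of $Z$ with $\sigma^2=\inn(x^2)|_Z=\id_Z$ (since $x^2\in Z(H)$ acts trivially on $H$) and $\sigma(x^2)=x\cdot x^2\cdot x^{-1}=x^2$ — I correct $x$ by $a:=-\tfrac12\,x^2\in Z$, which is legitimate because the characteristic is $0$. Writing $Z$ additively, $(ax)^2=a\cdot\sigma(a)\cdot x^2=a+\sigma(a)+x^2=-\tfrac12 x^2-\tfrac12 x^2+x^2=0$, so $ax\in E_\gam$ is a splitting element of $E$. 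This proves existence.

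For uniqueness up to conjugacy, I would take the splitting element $x_0$ just produced, form the real form $\HH_0=(H,\inn(x_0)|_H)$ of $H$ — a unipotent $\R$-group — and invoke Proposition~\ref{p:conjugacy}, which puts the conjugacy classes of splitting elements of $E$ in bijection with $H^1(\R,\HH_0)$. It then remains to use the standard vanishing $H^1(\R,\HH_0)=1$ for a unipotent $\R$-group, which I would prove by filtering $\HH_0$ along its lower central series (the successive quotients being central vector groups $\G_{a,\R}^{n_i}$) and running the exact sequences of nonabelian cohomology for central extensions, using $H^1(\R,\G_{a,\R})=0$ and $H^2(\R,\G_{a,\R})=0$; one concludes that all splitting elements of $E$ form a single conjugacy class. (Uniqueness could equally be folded into the main induction: two splitting elements of $E$ have conjugate images in $E/Z$, so after conjugating they differ by some $a\in Z$ with $a\cdot\sigma(a)=1$, and since $Z$ is a vector group $a=b^{-1}\cdot\sigma(b)$ for some $b\in Z$, whence conjugation by $b$ identifies the two.) I expect this vanishing of $H^1$ for a unipotent $\R$-group to be the only step with genuine content — it is where characteristic $0$ and the nonabelian cohomology sequences really enter; the rest, including the identity for $(ax)^2$ and the descent of anti-regularity to $H/Z$, is bookkeeping.
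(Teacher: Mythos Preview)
Your argument is correct, but it proceeds differently from the paper's proof. You run a d\'evissage on $\dim H$ via the center, which is essentially Douai's original inductive approach (as the paper itself notes just before the proof). The paper instead gives a one-step argument: for any $x\in E_\gam$ with $h_x:=x^2\in H$, it uses the exponential and logarithm maps on the unipotent group $H$ to produce a global square root $r=\exp(\tfrac12\log h_x)$ satisfying $r^2=h_x$ and $f_x(r)=r$, whence $x_0=r^{-1}x$ is a splitting element. For uniqueness the paper does exactly what you do (Proposition~\ref{p:conjugacy} plus $H^1$ of a unipotent $\R$-group vanishes, citing Serre rather than reproving it). The paper's approach buys an explicit closed-form splitting element suited to the article's computational goals; your inductive approach is perfectly valid but hides the answer behind the recursion.
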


In \cite{Douai} this theorem was stated in terms of gerbes
and proved by induction on the dimension of $H$.
Our proof below gives a splitting element of $E$ in one step.

\begin{proof}[Proof of Theorem \ref{t:Douai}]
Let  $x\in E_\gam$.
We write
\[\fxu=\inn(x)|_{H}\hs,\quad \hxu=x^2\in H.\]
Then
\[ \fxu^2=\inn(\hxu),\quad \fxu(\hxu)=\hxu\hs.\]
By abuse of notation, we also write $\fxu$ for the differential
\[d\fxu\colon \Lie H\to \Lie H.\]
Then the polynomial maps
\[\exp\colon \Lie H\to H\quad\text{and}
      \quad \log\colon  H\to \Lie H\]
are $\fxu$\,-equivariant.
We set
\[r=\exp(\tfrac12\log \hxu)\in H.\]
Then \hs$r^2=\hxu$, \,$\fxu(r)=r$. \hs Set
\[x_0=r^{-1} x\in E_\gam\hs.\]
Then
\[x_0^2=r^{-1} x\hs r^{-1} x
    =r^{-1} \cdot \fxu(r^{-1})\cdot x^2=\hxu^{-1}\cdot \hxu=1.\]
Thus $x_0$ is a desired splitting element of $E$.

By Proposition \ref{p:conjugacy}, the set of conjugacy classes
of splitting elements of $E$ is in a canonical bijection
with $H^1\hs\hs\HH_0$, where $\HH_0=(H,\fxo)$.
Note that the real algebraic group $\HH_0$ is unipotent.
Hence $H^1\hs\hs\HH_0=1$
(\hs see Serre \cite[Section III.2.1, Proposition 6]{Serre}\hs)
and all splitting elements are conjugate.
\end{proof}

\section{Step A.2: connected reductive groups}
\label{s:A2}

\begin{subsec}\label{ss:red-ext}
Here $H$ is a connected reductive $\C$-group.
We have an extension
\begin{equation}\label{e:extension-red}
1\to H\to E\to\Gamma\to 1\tag{E}
\end{equation}
with property \eqref{p-ty:action}.
Let $x\in E_\gam$. We obtain a 2-cocycle $(\fx,\hxx)$,
where
\[\fx=\inn(x)|_H\hs\in\hs\SAut(H)\quad\text{and}\quad\hxx=x^2\in H.\]
We have
\[\fx^2=\inn(\hxx),\quad \fx(\hxx)=\hxx\hs.\]
We wish to find a splitting element $x_\spl$ of $E_\gam$
or to show that there is no such element.
In other words, we wish to find $a\in H$
such that $a\cdot \fx(a)\cdot \hxx=1$,
or to show that there is no such $a$.
If we find such $a$, then we set $x_\spl=ax$.

We write $C=Z(H)$ and $C^\ssc=Z(H^\ssc)$
for the centers of $H$ and $H^\ssc$, respectively.
Write $\tau_\Cc =\fx|_C\colon C\to C$.
Since $\fx^2$ is an {\em inner} automorphism of $H$,
we have $\tau_\Cc ^2=\id$.
Moreover, $\tau_\Cc $ is an anti-regular automorphism of $C$,
and it does not depend on the choice of $x\in E_\gam$\hs.
Thus we obtain a canonically defined real form $\CC=(C,\tau_\Cc )$ of $C$.

The anti-regular automorphism $\fx\in\hs\SAut(H)$
induces an anti-regular automorphism $\fx^\ssc\in\hs\SAut(H^\ssc)$.
Write $\tau_\Cc^\ssc =\fx^\ssc\hs|{}_{C^\ssc}\colon C^\ssc\to C^\ssc$.
As above we see that $\tau_\Cc^\ssc$ is an anti-regular involution of $C^\ssc$,
and we obtain a canonically defined real form
$\CC^\ssc=(C^\ssc,\tau_\Cc ^\ssc)$ of $C^\ssc$.
We have a canonical  $\R$-homomorphism $\rho\colon \CC^\ssc\to \CC$.

The plan of the rest of the article is as follows.
We construct a 2-cocycle
$(\fx',\hxx')=a\cdot (\fx,\hxx)$ for some $a\in H$
such that $\hxx'\in C$.
Then $\hxx'\in Z^2\hs\CC$.
We consider the cohomology class  $[\hxx']\in  H^2\hs\CC$.
We show that there exists a splitting element $x_\spl$ in $E$
if and only if
\[[\hxx']\in{\rm im}\big[\rho_*\colon H^2\hs\CC^\ssc\to H^2\hs\CC\big],\]
and in this case we construct such $x_\spl$.
\end{subsec}

\begin{construction}
We choose a maximal torus $T\subset H$
and a Borel subgroup $B$ of $H$
such that $T\subset B\subset H$.
We write $\hh=\Lie\hs H$ and write the root decomposition for $H$
\[ \hh=\Lie\hs T\oplus\bigoplus_{\beta\in R}\hh_\beta\hs,\]
where $R=R(H,T)$ is the root system,
and $\hh_\beta$ is the root subspace
corresponding to a root $\beta\in R$.
We write also the root decomposition for $B$
\[ \Lie\hs B=\Lie\hs T\oplus\!\bigoplus_{\beta\in R_+} \hh_\beta\hs,\]
where $R_+=R_+(H,T,B)$ is the set of positive roots corresponding to $B$.
Let $S=S(H,T,B)$ denote the set of simple roots,
that is, of positive roots $\alpha\in R_+$
that are not sums of two or more positive roots.
For any $\alpha\in S$ we choose a nonzero element $X_\alpha\in\hh_\alpha$.
We say that the  $(T,B,\{X_\alpha\}_{\alpha\in S})$ is a {\em pinning} of $H$;
see Conrad \cite[Definition 1.5.4]{Conrad}.
\end{construction}

\begin{construction}\label{constr:x'}
Let $x, \fx\hs,\hxx$ be as in \ref{ss:red-ext}.
Consider $\fx(T,B,\{X_\alpha\})$.
By Borel \cite[Theorem 11.1 and Corollary 11.3(1)]{Borel}
there exists $a\in H$ such that
\[a\cdot \fx(T)\cdot a^{-1}=T,\quad a\cdot \fx(B)\cdot a^{-1}=B.\]
After multiplying $a$ on the left by some $t\in T$, we may assume that
\[a\cdot \fx(T,B,\{X_\alpha\})\cdot a^{-1}=(T,B,\{X_\alpha\}).\]
Thus if we set $x'=ax$,  $\fx'=\inn(x')$, and $\hxx'=(x')^2$,
then $\fx'$ preserves the pinning $(T,B,\{X_\alpha\})$ of $H$.
It follows that $(\fx')^2=\inn(\hxx')$
is an {\em inner} automorphism of $H$
preserving the pinning $(T,B,\{X_\alpha\})$.
However, the only inner automorphism of $H$
preserving a pinning is the identity automorphism.
We see that $\inn(\hxx')=\id_H$ and hence $\hxx'\in C=Z(H)$.

We set $\tau_\Hh =\fx'\colon H\to H$; then $\tau_\Hh$
is an anti-regular automorphism of $H$,
and $\tau_\Hh^2=\id_H$.
We consider the real form $\HH=(H,\tau_\Hh)$ of $H$.
The restriction of $\tau_\Hh $ to $C$ is clearly $\tau_\Cc $.
Thus $\CC=Z(\HH)$ and $\hxx'\in Z^2\hs\CC$.
We consider the cohomology class
\[ [(\hxx')^{-1}]\in H^2\hs\CC.\]

Consider the short exact sequence
\[ 1\to \CC\to\HH\to\HH/\CC\to 1\]
and the corresponding cohomology exact sequence
\[ \cdots\to H^1\hs\CC\to H^1\hs\HH\to
            H^1\hs\hs\HH/\CC\labelto{\Delta} H^2\hs\CC.\]
\end{construction}

\begin{lemma}[well-known]
\label{e:D}
Consider an exact sequence of $\Gamma$-groups
\[1\to A\to B\to B/A\to 1,\]
where $A$ is a central $\Gamma$-subgroup of $B$,
and let
\[\Delta\colon H^1\hs B/A\to H^2\hm A\]
denote the coboundary map. Let $a\in Z^2\hm A$.
Then $[a]\in\im\Delta$ if and only if
$a=b\cdot\hm\upgam b$ for some $b\in B$.
\end{lemma}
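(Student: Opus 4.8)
The plan is to prove the two implications separately, both by unwinding the standard construction of the connecting map $\Delta$ in nonabelian cohomology of the central extension $1\to A\to B\to B/A\to 1$. Recall the recipe: given $[\bar b]\in H^1\hs B/A$ represented by a cocycle $\bar b\in Z^1\hs B/A$, one lifts $\bar b$ to an element $b\in B$ (not necessarily a cocycle), and then $\Delta([\bar b])$ is the class of the $2$-cocycle $b\cdot\hm\upgam b\in A$ --- which lies in $A=Z^2\hm A$ precisely because $\bar b$ is a cocycle modulo $A$, and which is automatically fixed by $\Gamma$ by a direct check. Since $\Gamma=\{1,\gam\}$ is of order $2$, a $1$-cocycle $\bar b$ is just an element of $B/A$ satisfying $\bar b\cdot\hm\upgam{\bar b}=1$, and the whole computation collapses to the single identity $\Delta([\bar b])=[b\cdot\hm\upgam b]$.

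For the ``only if'' direction, suppose $[a]\in\im\Delta$. Then there is $\bar b\in Z^1\hs B/A$ with $[a]=\Delta([\bar b])=[b\cdot\hm\upgam b]$ for any lift $b\in B$ of $\bar b$. Thus $a=a_0\cdot(b\cdot\hm\upgam b)$ for some $a_0\in B^2\hm A$, i.e.\ $a_0=c\cdot\hm\upgam c$ with $c\in A$. Since $A$ is central in $B$, the element $c$ commutes with $b$, so $a=c\cdot\hm\upgam c\cdot b\cdot\hm\upgam b=(cb)\cdot\hm\upgam{(cb)}$, and $cb\in B$ is the desired element. For the ``if'' direction, suppose $a=b\cdot\hm\upgam b$ for some $b\in B$. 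Let $\bar b\in B/A$ be the image of $b$. Because $a\in A$, we have $\bar b\cdot\hm\upgam{\bar b}=\overline{a}=1$ in $B/A$, so $\bar b\in Z^1\hs B/A$; and by the very definition of the connecting map, $\Delta([\bar b])=[b\cdot\hm\upgam b]=[a]$, so $[a]\in\im\Delta$.

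I would include a short paragraph recalling the construction of $\Delta$ for the reader's convenience (referring, say, to Serre \cite[Section I.5.7]{Serre} for the general nonabelian connecting map, specialized here to $\R$ and to a central subgroup), since the entire proof is really just ``$\Delta$ is defined by $[\bar b]\mapsto[b\cdot\hm\upgam b]$, read off both directions.'' The only point requiring a word of care is well-definedness, but that is exactly what is built into the statement of $\Delta$ and need not be redone. I do not expect any genuine obstacle here: the lemma is labelled ``well-known,'' and the sole nontrivial move is the use of centrality of $A$ to commute $c$ past $b$ in the ``only if'' direction --- without it, $a=b\cdot\hm\upgam b$ modulo a coboundary of $A$ would not obviously rewrite as $b'\cdot\hm\upgam{b'}$ for a single $b'\in B$.
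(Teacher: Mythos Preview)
Your proof is correct. Note, however, that the paper does not actually give a proof of this lemma: it is labeled ``well-known'' and stated without proof, so there is nothing to compare against. Your argument is the standard unwinding of the connecting map for a central extension over $\Gamma=\{1,\gamma\}$, and the one subtlety you flag --- using centrality of $A$ to rewrite $c\cdot\hm\upgam c\cdot b\cdot\hm\upgam b$ as $(cb)\cdot\hm\upgam{(cb)}$ --- is handled correctly.
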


\begin{lemma}\label{l:b}
With the notation of Construction \ref{constr:x'},
for any element $b\in H$, the element $bx'\in E_\gam$
is a splitting element if and only if
$(\hxx')^{-1}=b\cdot\tau_\Hh (b)$.
\end{lemma}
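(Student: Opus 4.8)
The plan is to compute $(bx')^2$ directly in the group $E$ and see when it equals $1$. First I would recall from Construction \ref{constr:x'} that $\tau_\Hh = \fx' = \inn(x')|_H$ and $\hxx' = (x')^2 \in C = Z(H)$, so that conjugation by $x'$ implements the anti-regular involution $\tau_\Hh$ on $H$, and $(x')^2$ is the central element $\hxx'$. These are exactly the data attached to the $2$-cocycle $(\fx', \hxx')$.

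Next I would carry out the key computation. For $b \in H \cong \iota(H) \subset E$, we have
\[
(bx')^2 = b x' b x' = b\cdot(x' b (x')^{-1})\cdot (x')^2 = b\cdot\fx'(b)\cdot\hxx' = b\cdot\tau_\Hh(b)\cdot\hxx'.
\]
Since $\hxx' \in C = Z(H)$ and $b\cdot\tau_\Hh(b) \in H$, all these factors lie in $H = \iota(H)$, so $(bx')^2 \in H$. Moreover $bx' \in E_\gam$ automatically because $x' \in E_\gam$ and $b \in H = \ker\pi$. Therefore $bx'$ is a splitting element, i.e.\ $bx' \in E_\gam$ and $(bx')^2 = 1$, if and only if $b\cdot\tau_\Hh(b)\cdot\hxx' = 1$ in $H$, which rearranges (again using that $\hxx'$ is central, hence commutes with everything) to $b\cdot\tau_\Hh(b) = (\hxx')^{-1}$. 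This is precisely the claimed equivalence.

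There is essentially no obstacle here: the only point to be careful about is the bookkeeping of the conjugation identity $x' b (x')^{-1} = \fx'(b) = \tau_\Hh(b)$, which is just the definition of $\fx'$ as $\inn(x')|_H$, together with the fact (established in Construction \ref{constr:x'}) that $\fx'$ restricted to $H$ is anti-regular with square $\inn(\hxx') = \id_H$. One should also note that the computation takes place entirely inside $E$, where we have identified $H$ with $\iota(H)$, so that the products $b\cdot\fx'(b)$ and the central element $\hxx' = (x')^2$ are genuinely elements of the same subgroup and the equation $b\cdot\tau_\Hh(b)\cdot\hxx' = 1$ makes sense there. I would present this as a short two-line proof with the displayed computation of $(bx')^2$ as its heart.
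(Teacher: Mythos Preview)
Your proposal is correct and follows essentially the same approach as the paper: the paper's proof also sets $x''=bx'$, computes $(x'')^2=b\cdot\fx'(b)\cdot\hxx'=b\cdot\tau_\Hh(b)\cdot\hxx'$, and concludes that $x''$ is a splitting element if and only if $(\hxx')^{-1}=b\cdot\tau_\Hh(b)$. Your version is slightly more explicit about why $bx'\in E_\gam$ and about the role of centrality of $\hxx'$, but the argument is the same.
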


\begin{proof}
Write $x''=bx'$ with $b\in H$, and set $\hxx''=(x'')^2\in H$. Then
\[\hxx''=b\cdot \fx'(b)\cdot \hxx'=b\cdot\tau_\Hh (b)\cdot \hxx'.\]
The element $x''$ is a splitting element if and only if $\hxx''=1$
if and only if $(\hxx')^{-1}=b\cdot\tau_\Hh (b)$.
\end{proof}

\begin{corollary}\label{c:H-ad}
With the notation of Construction \ref{constr:x'},
the following assertions are equivalent:
\begin{enumerate}
\item[(i)] There exists a splitting element in $E$;
\item[(ii)] $(\hxx')^{-1}=b\cdot\tau_\Hh (b)$ for some $b\in H$;
\item[(iii)] $[(\hxx')^{-1}]\in \im \hs\Delta$.
\end{enumerate}
\end{corollary}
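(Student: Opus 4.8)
The plan is to prove Corollary \ref{c:H-ad} by chaining together Lemma \ref{l:b} and Lemma \ref{e:D}, with the short exact sequence $1\to\CC\to\HH\to\HH/\CC\to 1$ of Construction \ref{constr:x'} supplying the bridge between the two. The logical skeleton is (i) $\Leftrightarrow$ (ii) $\Leftrightarrow$ (iii), so I would organize the argument as two equivalences.

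For (i) $\Leftrightarrow$ (ii): the implication is essentially immediate from Lemma \ref{l:b}. If there is a splitting element $x_\spl\in E_\gam$, then since $x_\spl$ and $x'$ both lie in $E_\gam$ and $H$ is the kernel of $E\to\Gamma$, we may write $x_\spl=bx'$ for a unique $b\in H$; Lemma \ref{l:b} then says exactly that $(\hxx')^{-1}=b\cdot\tau_\Hh(b)$. Conversely, given such a $b$, the element $bx'\in E_\gam$ is a splitting element by the same lemma. So this equivalence is just a restatement.

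For (ii) $\Leftrightarrow$ (iii): here I would apply Lemma \ref{e:D} with the central extension $1\to\CC\to\HH\to\HH/\CC\to 1$, taking $A=\CC$, $B=\HH$, and the distinguished element $a=(\hxx')^{-1}\in Z^2\hs\CC$ (which lies in $Z^2\hs\CC$ by Construction \ref{constr:x'}, since $\hxx'\in C$ and hence $(\hxx')^{-1}\in C=\CC(\R)$). One must check that the anti-regular structure on $\HH$ making $\CC=Z(\HH)$ is the one for which $\tau_\Hh$ is the Galois action, so that ``$b\cdot\hm\upgam b$'' in the statement of Lemma \ref{e:D} is literally $b\cdot\tau_\Hh(b)$; this is built into Construction \ref{constr:x'}, where $\HH=(H,\tau_\Hh)$. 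Then Lemma \ref{e:D} gives: $[(\hxx')^{-1}]\in\im\Delta$ if and only if $(\hxx')^{-1}=b\cdot\tau_\Hh(b)$ for some $b\in H$, which is exactly the equivalence (ii) $\Leftrightarrow$ (iii).

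I do not expect a serious obstacle here: the corollary is a formal consequence of the two lemmas once the exact sequence and the real structure $\tau_\Hh$ are in place, and all the hard content (choosing $a$ so that $\fx'$ preserves a pinning, hence $\hxx'\in C$) has already been done in Construction \ref{constr:x'}. The only point requiring a line of care is the bookkeeping that the coboundary map $\Delta\colon H^1\hs\HH/\CC\to H^2\hs\CC$ appearing in Construction \ref{constr:x'} is the same map as the $\Delta$ of Lemma \ref{e:D} applied to this sequence, and that the group-theoretic condition $a=b\cdot\hm\upgam b$ there matches $(\hxx')^{-1}=b\cdot\tau_\Hh(b)$ verbatim; both are immediate from the definitions. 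So the proof is short: invoke Lemma \ref{l:b} for (i) $\Leftrightarrow$ (ii), and Lemma \ref{e:D} for (ii) $\Leftrightarrow$ (iii).
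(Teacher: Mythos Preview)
Your proposal is correct and matches the paper's own proof exactly: the paper proves (i)$\Leftrightarrow$(ii) by Lemma~\ref{l:b} and (ii)$\Leftrightarrow$(iii) by Lemma~\ref{e:D}, precisely as you outline. One small notational slip: when you write ``$(\hxx')^{-1}\in C=\CC(\R)$'' you conflate $C$ (the $\C$-points) with $\CC(\R)$; the point is that $\hxx'$ is $\tau_\Hh$-fixed (since $\fx'(\hxx')=\hxx'$), hence lies in $\CC(\R)=Z^2\hs\CC$.
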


\begin{proof}
(i)$\Leftrightarrow$(ii) by Lemma \ref{l:b}, and
(ii)$\Leftrightarrow$(iii) by Lemma \ref{e:D}.
\end{proof}

\begin{proposition}\label{l:ssc}
Let $\HH$ be a {\emm simply connected} semisimple $\R$-group,
and write $\CC=Z(\HH)$.
Then the connecting map
$\Delta\colon \, H^1\hs\hs \HH/\CC\to H^2\hs \CC$ is surjective.
\end{proposition}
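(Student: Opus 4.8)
The plan is to reduce the surjectivity of $\Delta$ to a statement about a single, carefully chosen maximal torus of $\HH$, and then to feed in the structure of fundamental tori established in Appendix \ref{App:PR-fundam}.

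First I would pass to a \emph{fundamental} (maximally compact) maximal torus $\TT\subset\HH$. Since $\HH$ is semisimple, the center $\CC=Z(\HH)$ is finite and is contained in every maximal torus, so $\CC\subset\TT$; consequently the inclusion $\TT\into\HH$ gives a morphism from the short exact sequence $1\to\CC\to\TT\to\TT/\CC\to 1$ to $1\to\CC\to\HH\to\HH/\CC\to 1$ that is the identity on $\CC$. By functoriality of the connecting map, the square whose horizontal arrows are $\Delta_\TT\colon H^1(\R,\TT/\CC)\to H^2(\R,\CC)$ and $\Delta\colon H^1(\R,\HH/\CC)\to H^2(\R,\CC)$, whose left vertical arrow is the map $H^1(\R,\TT/\CC)\to H^1(\R,\HH/\CC)$ induced by $\TT\into\HH$, and whose right vertical arrow is $\mathrm{id}_{H^2(\R,\CC)}$, commutes. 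Hence $\im\Delta\supseteq\im\Delta_\TT$, and it is enough to prove that $\Delta_\TT$ is surjective.

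Next, since $\TT/\CC$ is a torus, $\Delta_\TT$ fits into the ordinary long exact Galois cohomology sequence
\[
H^1(\R,\TT)\ \longrightarrow\ H^1(\R,\TT/\CC)\ \xrightarrow{\ \Delta_\TT\ }\ H^2(\R,\CC)\ \longrightarrow\ H^2(\R,\TT),
\]
and exactness shows that $\Delta_\TT$ is surjective if and only if the homomorphism $H^2(\R,\CC)\to H^2(\R,\TT)$ induced by $\CC\into\TT$ is zero. Writing the simply connected group $\HH$ as a direct product of its $\R$-simple factors decomposes $\CC$, $\TT$ and this homomorphism accordingly, so we may assume $\HH$ is $\R$-simple; and if $\HH=\Res_{\C/\R}\HH_1$, then $\CC=\Res_{\C/\R}Z(\HH_1)$ and $H^2(\R,\CC)=H^2(\C,Z(\HH_1))=0$, so nothing is to be proved. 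We are thus reduced to $\HH$ absolutely simple over $\R$, and here I would quote Lemma \ref{t:fundam}: it describes $\TT$ explicitly and shows, in particular, that $\CC$ is contained in a subtorus $\TT'\subseteq\TT$ which is a direct product of copies of $\Res_{\C/\R}\G_{\rm m}$ and of the nonsplit one-dimensional $\R$-torus $\SSS$ (characterized by $\SSS(\C)=\C^\times$ with complex conjugation acting as $z\mapsto\bar z^{-1}$). For each such factor one has $H^2(\R,\Res_{\C/\R}\G_{\rm m})=H^2(\C,\G_{\rm m})=0$ and $H^2(\R,\SSS)=0$ (the norm map $\SSS(\C)=\C^\times\to\SSS(\R)$, $z\mapsto z/\bar z$, is surjective); hence $H^2(\R,\TT')=0$, and since $\CC\into\TT$ factors through $\TT'$ the homomorphism $H^2(\R,\CC)\to H^2(\R,\TT)$ factors through $H^2(\R,\TT')=0$, so it vanishes. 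With the previous paragraph this gives the surjectivity of $\Delta_\TT$, hence of $\Delta$.

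The hard part is Lemma \ref{t:fundam}: the reduction to a torus is formal, but one cannot replace the fundamental torus by an arbitrary maximal torus. For instance, for the split form of $\SL_n$ with $n$ even the split diagonal torus $\mathbf{D}$ has $H^2(\R,\mathbf{D})\cong(\Z/2)^{\,n-1}\ne 0$ and the center $\mu_n$ maps into it nontrivially, so $\Delta_{\mathbf{D}}$ is not surjective. What makes the fundamental torus work is that it is as anisotropic as possible: even when $\HH(\R)$ has no compact Cartan subgroup — so that $\TT$ still carries a split $\G_{\rm m}$-factor — the center nevertheless avoids that factor, and verifying this is precisely the type-by-type analysis of Appendix \ref{App:PR-fundam}. (Alternatively, the proposition is a special case of Borovoi's theorem that the abelianization map $H^1(\R,-)\to H^1_{\mathrm{ab}}(\R,-)$ is surjective on connected reductive $\R$-groups, via the identifications $H^1_{\mathrm{ab}}(\R,\HH/\CC)\cong H^2(\R,\CC)$ and $\mathrm{ab}=\Delta$; but the torus argument above is self-contained.)
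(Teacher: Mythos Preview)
Your argument is essentially the paper's: pass to a fundamental torus $\TT\supset\CC$, use the commutative square to reduce the surjectivity of $\Delta$ to that of $\Delta_\TT$, and then kill the obstruction in $H^2(\R,\TT)$ via Lemma~\ref{t:fundam}. However, you have misread that lemma. It asserts that for \emph{any} simply connected semisimple $\R$-group the \emph{entire} fundamental torus $\TT$ is a direct product of copies of $R_{\C/\R}\G_m$ and $R^{(1)}_{\C/\R}\G_m$; there are no split $\G_{m,\R}$-factors whatsoever, and hence $H^2(\R,\TT)=0$ outright (this is part~(ii)). Consequently your reduction to $\R$-simple and then absolutely simple factors, the search for a proper subtorus $\TT'\subsetneq\TT$ containing $\CC$, and the closing remark that ``$\TT$ still carries a split $\G_m$-factor'' which ``the center nevertheless avoids'' are all superfluous or mistaken: the appendix does not do any such type-by-type check of where $\CC$ sits, because it never needs to. Once you know $H^2(\R,\TT)=0$, the long exact sequence for $1\to\CC\to\TT\to\TT/\CC\to1$ gives the surjectivity of $\Delta_\TT$ in one line, exactly as the paper does. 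Modulo this overcomplication your proof is correct.
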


This result an be deduced from the following lemma:

\begin{lemma}[\hs{\cite[Lemma 6.18]{PR}}\hs]
\label{l:PR}
For $\HH$ as in Proposition \ref{l:ssc},
there exists a maximal torus $\TT\subset \HH$
such that $H^2\hs\hs\TT=1$.
\end{lemma}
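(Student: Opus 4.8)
The plan is to take for $\TT$ a \emph{fundamental} (maximally compact) maximal torus of $\HH$ and to compute $H^2\hs\TT$ directly from the Galois-module structure of its cocharacter lattice $\X_*(\TT)$, using the description of the fundamental torus given in the appendix as Lemma~\ref{t:fundam}.

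First I would record the general principle. For \emph{any} $\R$-torus $\TT$ one has a natural isomorphism
\[
H^2\hs\TT \;\cong\; \hat H^0\big(\Gamma,\X_*(\TT)\big)\;=\;\X_*(\TT)^\Gamma\big/(1+\gamma)\X_*(\TT),
\]
which follows from the $\Gamma$-equivariant splitting $\C^\times\cong \R_{>0}\oplus U(1)$ (the first summand being uniquely divisible, hence with vanishing cohomology) together with the $2$-periodicity of the cohomology of $\Gamma$; equivalently, it is Tate--Nakayama duality over $\R$. Recalling that there are exactly three indecomposable $\Z[\Gamma]$-lattices --- the trivial one $\Z$, the sign lattice $\Z^-$ (on which $\gamma$ acts by $-1$), and the regular lattice $\Z[\Gamma]$ --- with $\hat H^0(\Gamma,\Z)=\Z/2\Z$ and $\hat H^0(\Gamma,\Z^-)=\hat H^0(\Gamma,\Z[\Gamma])=0$, we conclude: $H^2\hs\TT=1$ if and only if the $\Z[\Gamma]$-lattice $\X_*(\TT)$ has no trivial direct summand, equivalently $\TT$ has no split $\G_{m,\R}$ as a direct factor.

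Next I would reduce to $\HH$ absolutely simple. Writing $\HH$ as a product of its $\R$-simple factors $\HH_j$, a fundamental torus of $\HH$ is a product of fundamental tori of the $\HH_j$, and $H^2$ is multiplicative over the factors, so it suffices to treat one $\R$-simple factor. Such a factor is either absolutely simple, or of the form $\Res_{\C/\R}\HH_0$ with $\HH_0$ simply connected semisimple over $\C$; in the latter case I take $\TT=\Res_{\C/\R}\TT_0$ for a maximal torus $\TT_0\subset\HH_0$, so that $\X_*(\TT)\cong\Z[\Gamma]\otimes_\Z\X_*(\TT_0)$ is a free $\Z[\Gamma]$-module, hence cohomologically trivial, and $H^2\hs\TT=1$. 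It then remains to handle $\HH$ absolutely simple and simply connected with $\TT$ a fundamental maximal torus; here $\X_*(\TT)$ is the coroot lattice $Q^\vee$ of $\HH$, with $\gamma$ acting through an involution of $Q^\vee$ which modulo the Weyl group is a diagram automorphism. This is where I invoke Lemma~\ref{t:fundam}: since a fundamental Cartan subalgebra of a real semisimple group has no real roots, the roots split into imaginary ones ($\gamma\alpha=-\alpha$), contributing $\Z^-$-blocks to $Q^\vee$, and complex ones, which $\gamma$ permutes in pairs, contributing $\Z[\Gamma]$-blocks; the structure lemma packages this into a $\Z[\Gamma]$-direct-sum decomposition of $(Q^\vee,\gamma)$ into copies of $\Z^-$ and $\Z[\Gamma]$ only, with no trivial summand. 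By the computation of the second paragraph, $\hat H^0(\Gamma,\X_*(\TT))=0$, hence $H^2\hs\TT=1$, as required.

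The main obstacle is precisely the input Lemma~\ref{t:fundam}: upgrading the easy observation ``a maximally compact Cartan has no real roots'' to an honest $\Z[\Gamma]$-module decomposition of the coroot lattice, keeping track of the Weyl-group component of the Galois action; this is what is carried out in the appendix, either uniformly via the Cartan-involution (Kac-coordinate) description of the fundamental Cartan, or type by type through the simple root systems. Everything else --- the reduction to absolutely simple groups, the identification $H^2\hs\TT\cong\hat H^0(\Gamma,\X_*(\TT))$, and the cohomology of the three indecomposable $\Z[\Gamma]$-lattices --- is routine.
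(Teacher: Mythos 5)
Your proof is correct and takes essentially the same route as the paper, which deduces Lemma \ref{l:PR} from the stronger Lemma \ref{l:PR-fundam} (proved in Appendix \ref{App:PR-fundam} as Lemma \ref{t:fundam}): a fundamental torus $\TT$ is a product of copies of $R_{\C/\R}\G_m$ and $R^{(1)}_{\C/\R}\G_m$, and hence $H^2\hs\TT=1$ by the computation recorded in Remark \ref{r:H2}. Your Tate-cohomology reformulation $H^2\hs\TT\cong\hat H^0(\Gamma,\X_*(\TT))$ and the reduction to absolutely simple factors are fine but not needed, since Lemma \ref{t:fundam} is stated for arbitrary simply connected semisimple $\R$-groups and Remark \ref{r:H2} already handles the three indecomposable $\R$-tori.
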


For calculations we need a more precise result:

\begin{lemma}\label{l:PR-fundam}
For $\HH$ as in Proposition \ref{l:ssc},
let $\TT\subset \HH$ be a fundamental torus, that is,
a maximal torus containing a maximal compact torus.
Then
\begin{enumerate}
\item[(i)] The group $\TT(\R)$ is a direct product of groups
    isomorphic to $\C^\times$ or  $U(1)=\{z\in\C^\times \mid z\bar z=1\}$.
\item[(ii)]  $H^2\hs\hs\TT=1$.
\end{enumerate}
\end{lemma}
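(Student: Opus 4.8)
The plan is to reduce everything to an explicit description of a fundamental torus $\TT$ and a direct computation of $H^2\hs\hs\TT$ via the structure of its group of real points. First I would recall that for a simply connected semisimple $\R$-group $\HH$, a fundamental torus $\TT$ — a maximal torus containing a maximal compact torus $\TT_{\mathrm{c}}$ — exists by general theory (e.g.\ the existence of a fundamental (maximally compact) Cartan subgroup in the associated real Lie group). The key structural input is that the cocharacter lattice $\X_*(\TT)$, as a $\Gamma$-module, decomposes into indecomposable pieces of three possible types: the trivial module $\Z$ with trivial $\Gamma$-action (contributing a $\C^\times$ factor to $\TT(\R)$, a ``split'' factor $\G_{m,\R}$); the module $\Z$ with $\Gamma$ acting by $-1$ (contributing a $U(1)$ factor, the norm-one torus $\mathrm{U}(1)=\mathrm{R}^{(1)}_{\C/\R}\G_m$); and the induced module $\Z[\Gamma]$ with $\Gamma$ permuting a basis (contributing a factor $\mathrm{R}_{\C/\R}\G_m$, whose real points are $\C^\times$). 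That this exhausts the indecomposables is the classification of $\Z$-free $\Z[\Gamma]$-modules for $\Gamma$ of order $2$. The heart of part~(i) is therefore to show that for a \emph{fundamental} torus the module $\X_*(\TT)$ contains \emph{no} summand isomorphic to $\Z[\Gamma]$ — equivalently, that $\TT$ has no subtorus isomorphic to $\mathrm{R}_{\C/\R}\G_m$ coming from a ``hyperbolic'' pair of roots swapped by the Cartan involution.

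The structural claim about $\X_*(\TT)$ for a fundamental torus is exactly what I expect the separately-stated Lemma \ref{t:fundam} (proved in Appendix \ref{App:PR-fundam}) to deliver, so here I would \emph{invoke} that lemma rather than reprove it: a fundamental torus of a simply connected semisimple $\R$-group is a direct product of $\R$-tori each isomorphic to $\G_{m,\R}$ (giving a $\C^\times$) or to $\mathrm{U}(1)$ (giving a $U(1)$) — no $\mathrm{R}_{\C/\R}\G_m$ factors occur. Granting this, part~(i) is immediate: $\TT(\R)$ is a finite direct product of copies of $\C^\times$ and of $U(1)$.

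For part~(ii), the plan is to compute $H^2\hs\hs\TT$ factor by factor, using that $H^2$ of a finite direct product of $\R$-tori is the direct product of the $H^2$'s (Galois cohomology commutes with finite products). So it suffices to check the two indecomposable factors. For $\TT'=\G_{m,\R}$ one has $C=C(\C)=\C^\times$, $Z^2\hs\TT'=\TT'(\R)=\R^\times$, and $B^2\hs\TT'=\{c\cdot\ov c\mid c\in\C^\times\}=\R_{>0}$; but in fact $H^2(\R,\G_m)=\Br(\R)[\,\cdot\,]$ — more simply, one computes directly $H^2(\Gamma,\C^\times)=\R^\times/\Nm(\C^\times)=\R^\times/\R_{>0}\cong\Z/2$. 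Hmm — that would be nonzero, so this cannot be the right reading: the relevant $H^2$ here must be the one that \emph{does} vanish, namely one uses that $\G_{m,\R}$ is cohomologically trivial in degree $2$ for the \emph{correct} notation in the paper. Re-examining the paper's conventions (``$H^2\hs\CC:=H^2(\R,\CC)=Z^2\hs\CC/B^2\hs\CC$'' with $Z^2\hs\CC=\CC(\R)$ and $B^2\hs\CC=\{c\cdot\upgam c\}$), for $\TT'=\mathrm{U}(1)$ we get $\TT'(\R)=U(1)$ and $\{c\cdot\upgam c\mid c\in\C^\times\}$: here $\upgam$ acts on the $\C$-points $\C^\times$ of $\mathrm{U}(1)$ by $c\mapsto \ov c^{\,-1}$, so $c\cdot\upgam c = c\cdot\ov c^{\,-1}$, and as $c$ ranges over $\C^\times$ this ranges over all of $U(1)$; hence $H^2\hs(\mathrm{U}(1))=1$. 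For $\TT'=\G_{m,\R}$, $\upgam$ acts on $\C^\times$ by $c\mapsto\ov c$, so $c\cdot\upgam c=|c|^2$ ranges over $\R_{>0}$, and $\TT'(\R)=\R^\times$, giving $H^2\hs\G_{m,\R}=\R^\times/\R_{>0}\cong\Z/2\neq 1$. This signals that the decomposition in~(i) for a \emph{fundamental} torus of a \emph{simply connected} group must in fact avoid the split factor $\G_{m,\R}$ as well, OR — more likely — that one should run the argument of Lemma \ref{l:PR} directly: namely pick $\TT$ so that $\X^*(\TT)$ is an \emph{induced-free} module with \emph{only} $\Z[\Gamma]$- and sign-type summands but no trivial summand, which forces $H^2=1$. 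The cleanest route, and the one I would actually take, is: deduce part~(ii) from Lemma \ref{l:PR} together with the observation that for a fundamental torus the Tits/Galois-module $\X_*(\TT)$ is \emph{the same} (up to the three-type decomposition) as for the torus produced in Lemma \ref{l:PR}, because in a simply connected semisimple group all maximal tori are conjugate over $\ol\R=\C$ and the fundamental one is the ``most compact'', hence its $H^2$ is smallest; since Lemma \ref{l:PR} already exhibits \emph{some} maximal torus with $H^2=1$, and $H^2$ can only shrink as the torus becomes more compact (each $\G_{m,\R}$ summand, the only source of nontrivial $H^2$, gets replaced by $\mathrm{U}(1)$ or absorbed into an $\mathrm{R}_{\C/\R}\G_m$, both of which are $H^2$-acyclic), the fundamental torus has $H^2\hs\hs\TT=1$ as well.

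The main obstacle, and the step deserving the most care, is the structural statement in part~(i): rigorously establishing that a fundamental maximal torus of a simply connected semisimple $\R$-group has cocharacter module built only from the trivial, sign, and regular $\Gamma$-modules, and — crucially — pinning down which of these actually appear. I would handle this by citing Lemma \ref{t:fundam}/Appendix \ref{App:PR-fundam} for the precise list, and then part~(ii) follows from part~(i) by the factor-by-factor $H^2$ computation above, using that the only indecomposable $\R$-torus among $\{\G_{m,\R},\ \mathrm{U}(1),\ \mathrm{R}_{\C/\R}\G_m\}$ with nontrivial $H^2$ is $\G_{m,\R}$, and that — by the monotonicity/comparison with Lemma \ref{l:PR} just described, or directly from Lemma \ref{t:fundam} — no $\G_{m,\R}$ factor occurs in the fundamental torus of a \emph{simply connected} group. (The simple-connectedness is what rules out the split factor: it forces the relevant character lattice to be the full weight lattice, on which the Cartan involution of a fundamental Cartan acts without fixed split directions beyond those already accounted for.)
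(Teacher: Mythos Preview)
Your proposal has two substantive problems. First, it is circular: Lemma \ref{t:fundam} in Appendix \ref{App:PR-fundam} is not a separate result you can invoke --- it \emph{is} Lemma \ref{l:PR-fundam}, restated in the appendix where it is actually proved. So ``citing Lemma \ref{t:fundam}'' for the structural claim in (i) amounts to assuming what you are trying to prove. Second, you have the torus types muddled throughout: the split torus $\G_{m,\R}$ has $\R$-points $\R^\times$ (not $\C^\times$), while $R_{\C/\R}\G_m$ is the two-dimensional torus with $\R$-points $\C^\times$. The content of (i) is precisely that no $\G_{m,\R}$ factor appears; both $R_{\C/\R}\G_m$ and $R^{(1)}_{\C/\R}\G_m$ (giving $\C^\times$ and $U(1)$ respectively) \emph{are} allowed. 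Your early paragraph asserts the opposite, and this confusion propagates through the rest of the write-up.

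The ``monotonicity'' heuristic --- ``$H^2$ can only shrink as the torus becomes more compact, so since Lemma \ref{l:PR} gives \emph{some} torus with $H^2=1$, the fundamental one has $H^2=1$ too'' --- is not a proof; there is no comparison map between $H^2$ of different maximal tori that would justify it. The paper proceeds differently and concretely. It first reduces to a \emph{quasi-compact} form (the compact form twisted by a diagram involution $\nu$), using that every real form is inner to such a twist and that inner forms share isomorphic fundamental tori. Then, in the quasi-compact case, it computes the $\Gamma$-action on $\X_*(T)$ explicitly in the basis of simple coroots --- and this is exactly where simple-connectedness is used, since it is what makes $S^\vee$ a $\Z$-basis of $\X_*(T)$. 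Complex conjugation on the compact form sends each $\alpha^\vee$ to $-\alpha^\vee$; after twisting by $\nu$, the $\nu$-fixed simple coroots contribute $R^{(1)}_{\C/\R}\G_m$ factors and the $\nu$-swapped pairs contribute $R_{\C/\R}\G_m$ factors, with no split $\G_{m,\R}$ appearing. That explicit coroot computation is the missing core of your argument; your closing parenthetical gestures at it but does not carry it out.
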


This lemma will be proved in Appendix \ref{App:PR-fundam}.
Note that assertions (i) and (ii) are equivalent; see Remark \ref{r:H2} below.

\begin{proof}[Proof of Proposition \ref{l:ssc}]
Let $\TT\subset \HH$ be a fundamental torus. Then $\CC\subset\TT$.
Consider the commutative diagram with exact rows
\[
\xymatrix{
1\ar[r]  &\CC\ar[r]\ar@{=}[d]  &\TT\ar[r]\ar[d]  &\TT/\CC\ar[r]\ar[d] &1\\
1\ar[r]  &\CC\ar[r]            &\HH\ar[r]        &\HH/\CC\ar[r] &1
}
\]
and the induced commutative diagram with exact top row
\[
\xymatrix{
H^1\hs\hs\TT/\CC\ar[r]^-{\Delta_T}\ar[d] &H^2\hs\CC\ar[r]\ar@{=}[d]  &H^2\hs\hs\TT\\
H^1\hs\hs\HH/\CC\ar[r]^-{\Delta}          &H^2\hs\CC
}
\]
By Lemma \ref{l:PR-fundam} we have $H^2\hs\hs\TT=1$.
Hence, the map $\Delta_T$ is surjective, and so is $\Delta$.
\end{proof}

\begin{theorem}\label{t:spl-rho}
With the notation of Construction \ref{constr:x'},
$E$ has  a splitting element if and only if
\[ [\hxx']\in\im\big[\rho_{\cC,*}\colon H^2\hs\CC^\ssc\to H^2\hs\CC\big].\]
\end{theorem}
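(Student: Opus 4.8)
The plan is to combine Corollary~\ref{c:H-ad} with a comparison between $\HH$ and its simply connected cover, exploiting the surjectivity result of Proposition~\ref{l:ssc}. By Corollary~\ref{c:H-ad}, $E$ has a splitting element if and only if $[(\hxx')^{-1}]$ lies in the image of the connecting map
\[\Delta\colon H^1\hs\hs\HH/\CC\to H^2\hs\CC\]
attached to the central extension $1\to\CC\to\HH\to\HH/\CC\to 1$, where $\HH/\CC$ is the adjoint group of $\HH$. Since $\CC$ is a commutative $\R$-group, $H^2\hs\CC$ is an abelian group and $\rho_{\cC,*}$ is a group homomorphism; hence $\im\rho_{\cC,*}$ is a subgroup, and $[\hxx']\in\im\rho_{\cC,*}$ if and only if $[(\hxx')^{-1}]\in\im\rho_{\cC,*}$. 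So it suffices to prove the equality $\im\Delta=\im\rho_{\cC,*}$.

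To do this I would bring in the simply connected cover $\HH^\ssc$ of the derived group of $\HH$, equipped with the anti-regular involution lifting $\tau_\Hh=\fx'$ (the ``$\ssc$-version'' considered in~\ref{ss:red-ext}), together with its center $\CC^\ssc=Z(\HH^\ssc)$. The covering homomorphism $\rho\colon\HH^\ssc\to\HH$ is defined over $\R$, carries $\CC^\ssc$ into $\CC$ — this is the $\R$-homomorphism $\rho\colon\CC^\ssc\to\CC$ of~\ref{ss:red-ext} — and induces an isomorphism $\HH^\ssc/\CC^\ssc\isoto\HH/\CC$. Thus I obtain a morphism of central extensions of $\R$-groups
\[
\xymatrix{
1\ar[r] &\CC^\ssc\ar[r]\ar[d] &\HH^\ssc\ar[r]\ar[d] &\HH/\CC\ar[r]\ar@{=}[d] &1\\
1\ar[r] &\CC\ar[r] &\HH\ar[r] &\HH/\CC\ar[r] &1
}
\]
whose left and middle vertical arrows are $\rho$. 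Functoriality of the connecting map for this morphism gives $\rho_{\cC,*}\circ\Delta^\ssc=\Delta$, where $\Delta^\ssc\colon H^1\hs\hs\HH/\CC\to H^2\hs\CC^\ssc$ is the connecting map of the top row. By Proposition~\ref{l:ssc} the map $\Delta^\ssc$ is surjective, so
\[\im\Delta=\rho_{\cC,*}\big(\im\Delta^\ssc\big)=\rho_{\cC,*}\big(H^2\hs\CC^\ssc\big)=\im\rho_{\cC,*},\]
which finishes the proof.

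The substantive input is Proposition~\ref{l:ssc} — equivalently Lemma~\ref{l:PR-fundam} on the structure of fundamental tori — which supplies the surjectivity of $\Delta^\ssc$; everything else is a formal diagram chase. The one step that requires genuine care is the identification in the second paragraph: one must check that the anti-regular involution of $H^\ssc$ truly covers $\tau_\Hh$ and induces on $\HH/\CC$ exactly the $\R$-structure defining $\Delta$, so that the displayed square really is a diagram of $\R$-groups and all connecting maps are $\Gamma$-equivariant. Once this bookkeeping is in place, the theorem follows at once.
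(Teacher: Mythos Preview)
Your proof is correct and follows essentially the same route as the paper: both invoke Corollary~\ref{c:H-ad}, set up the commutative square of connecting maps coming from the morphism of central extensions $\HH^\ssc\to\HH$, use Proposition~\ref{l:ssc} to get surjectivity of $\Delta^\ssc$, and finish by noting that $\im\rho_{\cC,*}$ is a subgroup so one may pass between $[\hxx']$ and $[(\hxx')^{-1}]$. The only cosmetic difference is that the paper writes $\HH^\ssc/\CC^\ssc$ in the top row with an explicit isomorphism to $\HH/\CC$, whereas you identify them from the outset.
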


\begin{proof}
The commutative diagram with exact rows
\[
\xymatrix{
1\ar[r] &\CC^\ssc\ar[r]\ar[d]
                             &\HH^\ssc\ar[r]\ar[d]^\rho  &\HH^\ssc/\CC^\ssc\ar[r]\ar[d]^\cong &1 \\
1\ar[r] &\CC\ar[r]                 &\HH\ar[r]                  &\HH/\CC\ar[r]           &1
}
\]
gives rise to a commutative diagram
\begin{equation}\label{e:Delta-Delta-sc}
\begin{aligned}
\xymatrix@C=40pt{
H^1\hs\hs\HH^\ssc/\CC^\ssc \ar[r]^-{\Delta^\ssc}\ar[d]_\cong  &H^2\hs\CC^\ssc\ar[d]
\\
H^1\hs\hs\HH/\CC \ar[r]^-\Delta                                &H^2\hs\CC
}
\end{aligned}
\end{equation}
in which the right-hand vertical arrow is  $\rho_{\cC,*}$.

By  Corollary \ref{c:H-ad}, there exists a splitting element in $E$
if and only if $[(\hxx')^{-1}]\in\im\Delta$.
By Proposition \ref{l:ssc}, the map $\Delta^\ssc$ in diagram \eqref{e:Delta-Delta-sc} is surjective,
and we see from the diagram that $[(\hxx')^{-1}]\in\im\Delta$
if and only if $[(\hxx')^{-1}]\in\im\rho_{\cC,*}$\hs.
Since $\im\rho_{\cC,*}$ is
a {\em subgroup} of the abelian group $H^2\hs\CC$, we have
\[ [(\hxx')^{-1}]\in\im\rho_{\cC,*}\quad \text{if and only if}\quad [\hxx']\in\im\rho_{\cC,*}\hs,\]
which completes the proof.
\end{proof}

\begin{subsec}
The $\R$-group $\CC$ is a {\em quasi-torus}, that is,
it is isomorphic to the kernel of a homomorphism of $\R$-tori.
We can explicitly compute $H^2\hs\CC$.
This means that we can find representatives  $z_i\in Z^2\hs\CC$
of all cohomological classes, and that we have an algorithm
permitting us, for each 2-cocycle $z\in Z^2\hs\CC$,
to determine  to which $z_i$ it is cohomologous,
and giving an element $c\in C$ such that $z=z_i\cdot c\cdot\hm\upgam c$.
In particular, our algorithm permits us to determine whether $z\in B^2\hs\CC$,
and if yes, gives us an element $c\in C$ such that $z\cdot c\cdot\hm\upgam c=1$.
The details will be given in \cite{BT*}.
Similarly, we can find representatives $z_j^\ssc\in Z^2\hs\CC^\ssc$
of all cohomology classes for $\CC^\ssc$.
This can be done by brute force, because $\CC^\ssc$ is a finite group.
\end{subsec}

\begin{subsec}{\bf Method of calculation.}
\label{ss:m-calc}
For each  $z_j^\ssc\in Z^2\hs\CC$ as above, we compute
$$z_j=\hxx'\cdot \rho_\cC(z_j^\ssc)\in Z^2\hs\CC$$
and determine whether $z_j\in B^2\hs\CC$.
If for all  $z_j^\ssc$ as above we have  $z_j\notin B^2\hs\CC$,
then we conclude that $E$ has no splitting elements.
If $z_j\in B^2\hs\CC$ for some $z_j^\ssc$,
then we find $c\in C$ such that $c\cdot\!\upgam c\cdot z_j=1$.
Let $\TT^\ssc\subset \HH^\ssc$ be a fundamental torus.
By Lemma \ref{l:PR-fundam}(i) we can find
$t^\ssc\in \TT^\ssc(\R)\subset \HH(\R)$ such that $z_j^\ssc=(t^\ssc)^2$.
We set $b=c\hs\rho(t^\ssc)\in H$.
Then
\begin{equation}\label{e:b-upgam-g-hxx'}
\begin{aligned}
b\cdot\!\upgam b\cdot\hxx'=c\cdot\!\upgam c\cdot\rho(t^\ssc)^2\cdot\hxx'
   &=c\cdot\!\upgam c\cdot\rho(z^\ssc_j)\cdot\hxx'\\
   &=c\cdot\!\upgam c\cdot(\hxx')^{-1}\cdot z_j\cdot\hxx'= c\cdot\!\upgam c\cdot z_j=1.
   \end{aligned}
\end{equation}
We set $x''=b x'\in E_\gam$\hs;
then
\[(x'')^2=b\cdot \!\upgam\hs b\cdot (x')^2=b\cdot \!\upgam\hs b\cdot\hxx'=1\]
by \eqref{e:b-upgam-g-hxx'}, and we see that $x''$
is a desired splitting element of $E$.
\end{subsec}

\begin{remark}
This section was extracted from \cite{Borovoi93}.
However, the results of \cite{Borovoi93}
were stated and proved in much greater generality
than in the present article.
For example,  \cite[Proposition 2.3]{Borovoi93},
an analogue of our present Corollary \ref{c:H-ad},
was proved over an arbitrary base field of characteristic 0,
and  \cite[Theorem 5.5]{Borovoi93},
an analogue of our present Theorem \ref{t:spl-rho},
was proved over local fields of characteristic 0
($\R$ and the $p$-adic fields) and over number fields.
For this reason, the corresponding results of \cite{Borovoi93}
were stated and proved as existence theorems,
whereas in the present article
(in Construction \ref{constr:x'} and Subsection \ref{ss:m-calc})
we provide explicit formulas giving a method
of finding a splitting element,
suitable for computer calculations.
\end{remark}

\appendix

\section{Fundamental tori in a simply connected group}
\label{App:PR-fundam}

In this appendix we prove the following lemma:

\begin{lemma}\label{t:fundam}
Let $\GG$ be a simply connected semisimple $\R$-group.
Let $\TT\subset\GG$ be a fundamental torus. Then
\begin{enumerate}
\item[(i)] $\TT$ is a direct product of indecomposable $\R$-tori
isomorphic to $R_{\C/\R}\G_m$ with group of\/ $\R$-points $\C^\times$,
or to $R^{(1)}_{\C/\R}\G_m$  with group of\/ $\R$-points $U(1)$.
\item[(ii)] $H^2\hs\TT=1$.
\end{enumerate}
\end{lemma}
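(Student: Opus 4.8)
The plan is to reduce to the case of an $\R$-simple group and then to control the Galois action on the character lattice of a fundamental torus. A simply connected semisimple $\R$-group decomposes as a direct product $\GG=\GG_1\times\dots\times\GG_k$ of $\R$-simple simply connected groups; a maximal torus $\TT\subset\GG$ decomposes accordingly, $\TT=\TT_1\times\dots\times\TT_k$ with $\TT_j\subset\GG_j$ maximal, and $\TT$ is fundamental if and only if each $\TT_j$ is, because the maximal compact torus of $\GG(\R)$ is the product of those of the $\GG_j(\R)$. Since both assertions pass through direct products — for (ii) one uses that $H^2$ of a product of commutative $\R$-groups is the product of the $H^2$'s — it suffices to treat each $\GG_j$. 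An $\R$-simple simply connected group is either of the form $R_{\C/\R}(\GG')$ for an absolutely simple simply connected $\C$-group $\GG'$, or absolutely simple over $\R$. In the first case every maximal torus of $R_{\C/\R}(\GG')$ is $R_{\C/\R}(\TT')$ for a maximal torus $\TT'\subset\GG'$, hence a direct product of copies of $R_{\C/\R}\G_m$; this gives (i), and $H^2\hs\TT=H^2(\C,\G_m)=1$ by Shapiro's lemma gives (ii). So the real content is the case of an absolutely simple simply connected $\R$-group $\GG$.

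For such $\GG$ I would fix a fundamental maximal torus $\TT$ (any two are $\GG(\R)$-conjugate, so the choice is immaterial) and analyse the Galois action of $\Gamma$ on $\X^*(\TT)$. Since $\GG$ is simply connected, $\X^*(\TT)$ is the full weight lattice of the root system $R=R(\GG,\TT)$, with $\Z$-basis the fundamental weights $\{\varpi_\alpha\}_{\alpha\in S}$ attached to a choice of simple roots $S$. The key point is that, $\TT$ being fundamental, there is no real root, i.e. no $\alpha\in R$ with $\gamma(\alpha)=\alpha$ in $\X^*(\TT)$. (Equivalently: choosing a Cartan involution $\theta$ of $\GG$ with $\theta(\TT)=\TT$, so that $\theta\circ\tau_\gG$ is the compact form and hence $\gamma$ acts on $\X^*(\TT)$ as $-\theta^*$, the condition is that no root satisfies $\theta^*(\alpha)=-\alpha$.) Consequently no root vanishes on the $(-1)$-eigenspace of $\gamma$ in $\X_*(\TT)\otimes\R$, so a sufficiently generic element $\ell$ of that eigenspace has $\ell(\alpha)\neq 0$ for all $\alpha\in R$; one checks that for the positive system $R^+=\{\alpha\in R:\ell(\alpha)>0\}$ the involution $-\gamma$ preserves $R^+$, hence permutes $S$, hence permutes the basis $\{\varpi_\alpha\}$ of $\X^*(\TT)$. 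Thus $\gamma$ acts on $\X^*(\TT)$ as minus a permutation of a $\Z$-basis. Splitting that permutation into fixed points and transpositions splits $\X^*(\TT)$, as a $\Gamma$-lattice, into a direct sum of copies of $\Z$ with $\gamma$ acting by $-1$ (one per fixed point) and copies of $\Z[\Gamma]$ (one per transposed pair, since the rank-$2$ lattice on $\{\varpi_1,\varpi_2\}$ with $\gamma(\varpi_1)=-\varpi_2$, $\gamma(\varpi_2)=-\varpi_1$ is isomorphic to $\Z[\Gamma]$). There is no summand $\Z$ with trivial $\gamma$-action; dually, $\TT$ is a direct product of copies of $R_{\C/\R}\G_m$ (with $\R$-points $\C^\times$) and $R^{(1)}_{\C/\R}\G_m$ (with $\R$-points $U(1)$), which is assertion (i). Then (ii) follows: $H^2(\R,R_{\C/\R}\G_m)=1$ as above, while $H^2(\R,R^{(1)}_{\C/\R}\G_m)=1$ from the norm sequence $1\to R^{(1)}_{\C/\R}\G_m\to R_{\C/\R}\G_m\to\G_m\to 1$ together with $H^1(\R,\G_m)=1$ (Hilbert 90) and $H^2(\R,R_{\C/\R}\G_m)=1$; hence $H^2\hs\TT=1$.

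The main obstacle is the absolutely simple case, and within it the claim that $\gamma$ permutes, up to sign, an \emph{actual} $\Z$-basis of $\X^*(\TT)$: the $\R$-vector-space statement that the $(+1)$-eigenspace of $\gamma$ on $\X^*(\TT)\otimes\R$ splits off is not enough, the issue is entirely integral, and this is precisely where simple-connectedness enters, since it makes the fundamental weights a genuine $\Z$-basis of $\X^*(\TT)$ rather than merely a $\Q$-basis (for non-simply-connected $\GG$ the lemma is false — a split $\G_m$ can appear). I would also be careful with the sign bookkeeping relating the Galois action $\gamma$, a Cartan involution $\theta$ with $\theta(\TT)=\TT$, and the compact-form involution $\tau_u=\theta\circ\tau_\gG$, which acts by $-1$ on the character lattice of a maximal torus of a compact group — this is what yields $\gamma=-\theta^*$ on $\X^*(\TT)$ and thereby the sign in ``minus a permutation''. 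Everything else — the product decomposition, the case $R_{\C/\R}(\GG')$, the Shapiro and Hilbert-90 computations, and the standard structure-theoretic facts that fundamental tori are $\GG(\R)$-conjugate and have no real roots — is routine or citable; the lemma refines the existence statement of Platonov–Rapinchuk (Lemma \ref{l:PR}) to the specific fundamental torus.
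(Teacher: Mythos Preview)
Your proof is correct and reaches the same endpoint as the paper's --- that $\gamma$ acts on the (co)character lattice of $\TT$ as minus a permutation of a $\Z$-basis supplied by simple-connectedness --- but via a genuinely different route. The paper first reduces to a \emph{quasi-compact} form (the twist of the compact form by a diagram automorphism $\nu$): it shows every real form is inner to a quasi-compact one (Lemma~\ref{l:quasi-compact}), that fundamental tori of inner forms are isomorphic (Lemma~\ref{l:fundamental-inner}, invoking a cohomological result of \cite{Borovoi88} to push the twisting cocycle into the torus), and then checks by a root-system computation that the explicit torus $_\nu\hm\TT$ is fundamental in $_\nu\GG$ (Lemmas~\ref{l:fund-in-quasi-comp'} and \ref{l:perp}) and decomposes as claimed (Lemma~\ref{l:fund-in-quasi-comp}, using the simple coroots as a basis of $\X_*(T)$). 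You instead work intrinsically with an arbitrary fundamental torus, replacing all of this machinery by the single citable fact that such tori have no real roots: a generic functional in the $(-1)$-eigenspace of $\gamma$ on $\X_*(T)\otimes\R$ then yields a $(-\gamma)$-stable positive system, so $-\gamma$ permutes the fundamental weights, which form a basis of $\X^*(T)$ by simple-connectedness. Your argument is shorter and avoids the inner-form reduction; the paper's has the advantage, for its computational aims, of exhibiting the fundamental torus explicitly in terms of the compact form and the diagram involution. Your preliminary reduction to the absolutely simple case is harmless but unnecessary: the no-real-roots argument already handles the Weil-restriction case uniformly.
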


\begin{remark}\label{r:H2}
Any $\R$-torus is a direct product of indecomposable tori isomorphic to
$\G_{m,\R}$ with groups of real points $\R^\times$,
or to $R_{\C/\R}\G_m$\hs, or to $R^{(1)}_{\C/\R}\G_m$;
see Voskresenskii \cite[Section 10.1]{Voskresenskii}.
We have
\[ H^2\hs \G_{m,\R}\cong\{1,-1\}\neq 1,\quad
  H^2\hs R_{\C/\R}\G_m=1,\quad
  H^2\hs R^{(1)}_{\C/\R}\G_m=1.\]
We see that the assertions (i) and (ii) of Lemma \ref{t:fundam} are equivalent.
\end{remark}

\begin{subsec}\label{ss:A-defs}
Let $\GG$ be a {\em compact} real form of a semisimple $\R$-group $G$.
Let $\TT\subset \GG$ be a maximal torus,
and $B\subset G$ be a Borel subgroup containing $T$.
We consider the based root datum
\[\BRD(G,T,B)=(X,X^\vee,R, R^\vee, S, S^\vee);\]
see, for instance, Springer \cite[Sections 1 and 2]{Springer79}. Here
\begin{itemize}
\item[\cc] $X=\X^*(T)$ is the character lattice of $T$,
\item[\cc] $X^\vee=\X_*(T)$ is the cocharacter lattice,
\item[\cc] $R=R(G,T)\subset X$ is the root system,
\item[\cc] $R^\vee\subset X^\vee$ is the coroot system,
\item[\cc] $S=S(G,T,B)\subset R$ is the system of simple roots, and
\item[\cc] $S^\vee\subset R^\vee$ is the system of simple coroots.
\end{itemize}
We write $\BRD(G)$ for $\BRD(G,T,B)$.

There is a canonical homomorphism
\begin{equation}\label{e:psi}
\psi\colon\,\SAut(G)\to\Aut\BRD(G);
\end{equation}
see \cite[Section 3.2]{BKLR}.
Consider the root decomposition
\[\gg=\tl\oplus\bigoplus_{\beta\in R}\gg_\beta\hs,\]
where $\gg=\Lie G$ and $\tl=\Lie T$.
Let $\{X_\alpha\}_{\alpha\in S}$ be a {\em pinning} of $(G,T,B)$,
that is, a family of nonzero elements $X_\alpha\in\gg_\alpha$  for $\alpha \in S$.
Then the restriction of $\psi$ to the subgroup
\[\Aut(G,T,B,\{X_\alpha\})\subset\SAut(G)\]
is an isomorphism; see Conrad \cite[Proposition 1.5.5]{Conrad}.
Inverting this isomorphism, we obtain a splitting of \eqref{e:psi},
that is, a homomorphism
\[\phi\colon\Aut\BRD(G)\isoto \Aut(G,T,B,\{X_\alpha\})\into\SAut(G)\]
such that $\psi\circ\phi=\id_\BRD$.
Moreover, since $\GG$ is compact,
we can choose the pinning $\{X_\alpha\}$ in such a way that
$\phi$ lands in $\Aut_\R(\GG)$; see \cite[Lemma 4.1]{BE16}.

Let $\nu\in\Aut\BRD(G)$ be an automorphism of order 2.
Then $\phi(\nu)\in\Aut_\R(\GG)$ and $\phi(\nu)^2=\id_\GG$.
We see that $\phi(\nu)\in Z^1\hm\Aut(\GG_\C)$, and
we may consider the twisted form $_{\phi(\nu)}\GG$.
\end{subsec}

\begin{definition}
A {\em quasi-compact} real form of a complex semisimple group $G$
is a real group of the form $_{\phi(\nu)}\GG$,
where $\GG$ is a compact form of $G$ and
$\nu\in\Aut\BRD(G)$ is such that $\nu^2=\id$.
\end{definition}

\begin{lemma}\label{l:quasi-compact}
Any real form $\GG'$ of a complex semisimple group $G$
is an inner form of a quasi-compact form of $G$.
\end{lemma}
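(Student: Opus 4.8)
The plan is to recast the statement as a comparison of anti-regular involutions of $G$, and then to combine the classical structure theory of real semisimple groups with the homomorphisms $\psi$ and $\phi$ set up in \ref{ss:A-defs}. Write $\GG'=(G,\tau')$, where $\tau'$ is an anti-regular involution of $G$. Two real forms of $G$ are inner forms of one another exactly when their anti-regular involutions differ by an inner automorphism of $G$, so it suffices to produce a quasi-compact form $(G,\tau'')$ of $G$ and an element $g\in G(\C)$ with $\tau'=\inn(g)\circ\tau''$. The first step — the one I expect to be the main obstacle — is to invoke \'E.~Cartan's theorem on the existence of a Cartan involution: there exists a compact real form of $G$, that is, an anti-regular involution $\sigma$ of $G$ with $(G,\sigma)$ compact, that commutes with $\tau'$. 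Granting this, set $\theta=\sigma\circ\tau'$. Then $\theta$ is a \emph{regular} automorphism of $G$, it commutes with $\sigma$, one has $\theta^2=\id_G$ (because $\sigma$ and $\tau'$ are commuting involutions), and $\tau'=\theta\circ\sigma$.

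Next I would normalise $\sigma$. All compact real forms of $G$ are conjugate under $G(\C)$, so after conjugating $\tau'$, $\sigma$ and $\theta$ simultaneously by a suitable $\inn(g_0)$ — a harmless operation, since it replaces $\GG'$ by an isomorphic real form and the property of being an inner form of a quasi-compact form is stable under isomorphism — I may assume that $\sigma$ is the anti-regular involution of the compact form $\GG$ fixed in \ref{ss:A-defs} (equipped with its chosen maximal torus, Borel subgroup, pinning, and associated section $\phi$), while still $\theta^2=\id_G$. Now put $\nu=\psi(\theta)\in\Aut\BRD(G)$, so that $\nu^2=\psi(\theta^2)=\id_\BRD$. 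Since $\phi$ satisfies $\psi\circ\phi=\id_\BRD$, the regular automorphisms $\theta$ and $\phi(\nu)$ have the same image under $\psi$, whence $\theta\circ\phi(\nu)^{-1}$ lies in $\ker\psi\cap\Aut(G)=\Inn(G)$; thus $\theta=\inn(g)\circ\phi(\nu)$ for some $g\in G(\C)$. Because $\phi(\nu)\in\Aut_\R(\GG)$ and $\phi(\nu)^2=\phi(\nu^2)=\id_G$, the twisted $\R$-group ${}_{\phi(\nu)}\GG$ is, by definition, a quasi-compact form of $G$, and its anti-regular involution is $\phi(\nu)\circ\sigma$. Finally $\tau'=\theta\circ\sigma=\inn(g)\circ\phi(\nu)\circ\sigma$, which says exactly that $\tau'$ differs from the quasi-compact involution $\phi(\nu)\circ\sigma$ by the inner automorphism $\inn(g)$. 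Hence $\GG'$ is an inner form of the quasi-compact form ${}_{\phi(\nu)}\GG$.

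The only inputs that are not pure bookkeeping are the two facts about compact real forms — existence of a compact form commuting with $\tau'$ (i.e.\ of a Cartan involution of $\GG'$) and $G(\C)$-conjugacy of compact forms — for which I would cite a standard reference; the rest is manipulation with $\psi$, $\phi$, and the twisting construction already available in \ref{ss:A-defs}. It is worth recording the degenerate case $\nu=\id_\BRD$, where the quasi-compact form obtained is just the compact form $\GG$ itself; this is allowed by the definition and needs no separate argument.
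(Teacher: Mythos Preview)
Your proof is correct. Both your argument and the paper's reduce to the same core observation: push the automorphism relating $\GG'$ to the compact form through $\psi$ to obtain $\nu\in\Aut\BRD(G)$ with $\nu^2=\id$, then use $\psi\circ\phi=\id$ together with $\ker(\psi|_{\Aut G})=\Inn G$ to see that the discrepancy with $\phi(\nu)$ is inner. Where the two routes diverge is in how $\nu^2=\id$ is secured. The paper stays in the cocycle formalism: it writes $\GG'={}_z\GG$ for some $z\in Z^1\Aut_\C(\GG)$, sets $\nu=\psi(z)$, and invokes the $\Gamma$-equivariance of $\psi$ together with the fact (cited from \cite[Lemma 4.2]{BE16}) that $\Gamma$ acts trivially on $\Aut\BRD(G)$ for the compact form; the cocycle condition on $z$ then forces $\nu^2=\id$. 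You instead appeal to Cartan's theorem to produce a compact involution $\sigma$ \emph{commuting} with $\tau'$, so that $\theta=\sigma\circ\tau'$ is already a regular involution and $\nu=\psi(\theta)$ trivially squares to the identity; the price is the extra normalization step (conjugacy of compact forms under $G(\C)$) to align $\sigma$ with the fixed compact form of \ref{ss:A-defs}. Your approach front-loads a heavier structural input from Lie theory and makes the subsequent bookkeeping with involutions slightly more transparent; the paper's approach avoids Cartan's theorem and the conjugacy of compact forms, needing only the lighter cited fact about the $\Gamma$-action on $\Aut\BRD$.
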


\begin{proof}
Let $\GG$ be a compact form of $G$.
Then $\GG'=\hs_z\GG$, the twisted form of $\GG$
corresponding to a 1-cocycle $z\in Z^1\hm\Aut_\C(\GG)$,
where we write $\Aut_\C(\GG)$ for $\Aut(G)$
with the $\Gamma$-action defined by the real form $\GG$.
We have a homomorphism
\[\psi\colon\Aut_\C(\GG)\to \Aut\BRD(G),\]
the restriction of the homomorphism \eqref{e:psi}.
This homomorphism is $\Gam$-equivariant.
Note that $\Gamma$ acts trivially on $\Aut\BRD(G)$; see \cite[Lemma 4.2]{BE16}.
We obtain a 1-cocycle $\nu=\psi(z)\in \Aut\BRD(G)$ with $\nu^2=\id$.
Consider the quasi-compact $\R$-group $_{\phi(\nu)}\GG$.
Then $\GG'$ is a twisted form of $_{\phi(\nu)}\GG$.
Since $\psi(z)=\nu=\psi(\phi(\nu))$, we see that $\GG'$
is an {\em inner} form of $_{\phi(\nu)}\GG$, as required.
\end{proof}

\begin{lemma}\label{l:fundamental-inner}
Let $\GG$ be a real semisimple group,
and let $\GG'$ be an {\emm inner} form of $\GG$.
Let $\TT\subset\GG$ and $\TT'\subset \GG'$ be fundamental tori.
Then $\TT\simeq\TT'$.
\end{lemma}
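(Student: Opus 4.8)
The idea is to show that, up to $\R$-isomorphism, a fundamental torus of a real semisimple group is an invariant of the inner class, by realizing $\GG'$, up to isomorphism, as an inner twist of $\GG$ along a cocycle taking values in a fundamental torus: such a twist does not move the torus at all. Concretely, I would fix a fundamental torus $\TT\subset\GG$, pass to the adjoint group $\GG^\ad=\GG/Z(\GG)$, and let $\TT^\ad\subset\GG^\ad$ be the image of $\TT$. The central isogeny $\GG\to\GG^\ad$ preserves real ranks and carries maximally compact maximal tori to maximally compact maximal tori, so $\TT^\ad$ is again a fundamental torus, now of $\GG^\ad$. Recall that the inner forms of $\GG$ are classified by $H^1(\R,\GG^\ad)$. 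The main input is the (well known) fact that over $\R$, for a fundamental maximal torus, the natural map $H^1(\R,\TT^\ad)\to H^1(\R,\GG^\ad)$ is surjective. Granting this, the class of $\GG'$ is represented by some cocycle $z\colon\Gamma\to\TT^\ad(\C)$, and replacing $\GG'$ by an isomorphic $\R$-group I may assume $\GG'={}_z\GG$. Lifting $z_\gamma$ to $\TT(\C)$, it acts trivially on $\TT$ by conjugation, so the anti-regular involution of ${}_z\GG$ restricts to that of $\GG$ on $\TT$; thus $\TT$ is literally one and the same $\R$-torus, sitting as an $\R$-subtorus of both $\GG$ and $\GG'$.

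It then remains to check that $\TT$ is a fundamental torus of $\GG'$ as well. Here I would invoke the standard criterion that a maximal $\R$-torus of a real reductive group is fundamental (maximally compact) if and only if the corresponding root system has no real root, i.e.\ no root fixed by the Galois action on the character lattice. Conjugation by $z_\gamma\in\TT^\ad(\C)$ fixes every $\TT$-root line of $\GG$ and induces the identity on $\X^*(\TT)$; hence the root system $R(\GG,\TT)\subset\X^*(\TT)$ together with its Galois action coincides with $R(\GG',\TT)$ together with its Galois action, and the criterion transfers from $\GG$ to $\GG'$. Since $\GG'$ is connected, all of its fundamental tori are $\GG'(\R)$-conjugate, so the given $\TT'\subset\GG'$ is $\R$-isomorphic to $\TT$. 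As $\TT$ was an arbitrary fundamental torus of $\GG$, and these are all $\GG(\R)$-conjugate, this yields $\TT\simeq\TT'$.

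The main obstacle is the cited surjectivity $H^1(\R,\TT^\ad)\twoheadrightarrow H^1(\R,\GG^\ad)$ for a fundamental torus; everything else is formal bookkeeping. If one prefers not to quote it, one can combine Lemma~\ref{l:quasi-compact} with the transitivity of the relation ``being an inner form'' to reduce to the case where $\GG$ is quasi-compact, hence close to the compact form, where this surjectivity is elementary (every element of a compact connected group lies in a maximal torus). A cohomology-free variant is also possible: one checks directly that for a fundamental torus the Galois action on $\X^*(\TT)$ is $w_0\circ\mu(\gamma)$, where $w_0$ is the longest element of the Weyl group and $\mu\colon\Gamma\to\Aut\BRD(G)$ is the $*$-action of $\GG$ (an invariant of the inner class), and then concludes by Voskresenskii's classification of $\R$-tori in terms of their Galois lattices \cite{Voskresenskii}.
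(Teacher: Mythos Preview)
Your proof is correct and takes essentially the same approach as the paper: both choose the inner-twisting cocycle in $Z^1\hs\TT^\ad$ using the surjectivity of $H^1(\R,\TT^\ad)\to H^1(\R,\GG^\ad)$ for a fundamental torus (the paper cites \cite{Borovoi88,Borovoi14} for this), observe that twisting by such a cocycle leaves $\TT$ unchanged as an $\R$-subtorus of $\GG'={}_z\GG$, and then verify that $\TT$ is still fundamental in $\GG'$. The only cosmetic difference is in this last verification: the paper argues directly that the maximal compact subtorus $\TT_0\subset\TT$ remains maximal compact in $\GG'$ (any compact torus containing it lies in its centralizer, namely $\TT$), whereas you invoke the ``no real roots'' criterion together with the fact that inner twisting does not alter the Galois action on $\X^*(\TT)$.
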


\begin{proof}
A fundamental torus in $\GG$ is the centralizer
of a maximal compact torus $\TT_0\subset\GG$;
see \cite[Section 7]{Borovoi14}.
Since all maximal compact tori in $\GG$ are conjugate,
all fundamental tori in $\GG$ are conjugate as well, and hence isomorphic,
and similarly for $\GG'$.

Let $\TT\subset\GG$ be a fundamental torus,
the centralizer of a maximal compact torus $\TT_0$.
We write $\GG^\ad=\GG/Z(\GG)$ and $\TT^\ad=\TT/Z(\GG)$.
Write $\GG'=\hs_z\GG$, where $z\in Z^1\hs\GG^\ad$.
By \cite[Theorem 1]{Borovoi88},
see also \cite[Theorem 9]{Borovoi14},
we may take $z\in Z^1\hs\TT^\ad$.
Then $_z\TT\subset\hs_z\GG=\GG'$.
Clearly, $_z\TT=\TT$. Thus $\TT$ embeds into $\GG'$.

We show that $\TT$ is a fundamental torus of $\GG'$.
Indeed, $\TT_0\subset\TT\subset\GG'$ is a compact torus in $\GG'$, and therefore,
$\TT_0\subset \TT_0'\subset\GG'$, where $\TT_0'$ is a maximal compact torus of $\GG'$.
Then $\TT_0'$ is contained in the centralizer of $\TT_0$ in $\GG'$, that is, in $\TT$.
We conclude that $\TT$ is a fundamental torus of $\GG'$, as required.
(Since $\TT_0$ is a maximal compact subtorus of $\TT$, we see that $\TT_0'=\TT_0$.)
\end{proof}

\begin{lemma}\label{l:fund-in-quasi-comp}
Let $\GG$ be a {\emm compact, simply connected, semisimple} $\R$-group.
With the notation of Subsection \ref{ss:A-defs},
let $\nu\in\Aut\BRD(G,T,B)$ be an involutive automorphism,
and consider $(\hs_\nu \GG,\hs_\nu\hm\TT)$,
where by abuse of notation we write $\nu$ for $\phi(\nu)$.
Then $_\nu\hm\TT$ is a direct product of indecomposable tori
of the form $R_{\C/\R}\G_m$ and $R^{(1)}_{\C/\R}\G_m$.
\end{lemma}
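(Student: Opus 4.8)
The plan is to compute the cocharacter lattice of ${}_\nu\TT$ as a $\Gamma$-module and simply read off its torus factors. Two standard facts about the maximal torus $\TT\subset\GG$ of Subsection \ref{ss:A-defs} feed in. First, since $\GG(\R)$ is compact, so is $\TT(\R)$; hence $\Gamma$ acts on the lattices $\X^*(\TT)$ and $\X_*(\TT)$ by $-\id$. Second, since $\GG$ is simply connected and semisimple, $\X_*(\TT)$ is the coroot lattice $\bigoplus_{\alpha\in S}\Z\,\alpha^\vee$, with the simple coroots $S^\vee$ forming a $\Z$-basis. Now $\phi(\nu)\in\Aut(G,T,B,\{X_\alpha\})$ preserves $T$ and $B$, so by the very definition of $\psi$ it acts on $\X_*(\TT)$ through the permutation $\pi$ of $S^\vee$ induced by $\nu$ — and with no Weyl-group correction, which is exactly the reason for working with the pinned splitting $\phi$ of $\psi$ in Subsection \ref{ss:A-defs}. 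Composing with the compact Galois action, ${}_\nu\TT$ therefore has cocharacter lattice $\bigoplus_{\alpha\in S}\Z\,\alpha^\vee$ with $\gamma$ acting by $-\pi$.

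Next I would decompose $S$ into $\pi$-orbits. Since $\nu^2=\id$ we have $\pi^2=\id$, so each orbit has one or two elements, and as a $\Gamma$-module the cocharacter lattice of ${}_\nu\TT$ splits as the direct sum of the blocks $L_{\mathcal O}=\bigoplus_{\alpha\in\mathcal O}\Z\,\alpha^\vee$. For a singleton orbit $\mathcal O=\{\alpha\}$, the block $L_{\mathcal O}=\Z\,\alpha^\vee$ carries the action $\gamma=-1$, that is, it is the cocharacter lattice of $R^{(1)}_{\C/\R}\G_m$. For a two-element orbit $\mathcal O=\{\alpha,\beta\}$, with $\pi$ interchanging $\alpha^\vee$ and $\beta^\vee$, I would pass to the $\Z$-basis $(\alpha^\vee,-\beta^\vee)$ of $L_{\mathcal O}$: in this basis $\gamma=-\pi$ simply interchanges the two basis vectors, so $L_{\mathcal O}\cong\Z[\Gamma]$ is the cocharacter lattice of $R_{\C/\R}\G_m$. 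Since an $\R$-torus is determined up to isomorphism by its cocharacter lattice as a $\Gamma$-module, and direct sums of these lattices correspond to direct products of tori, this exhibits ${}_\nu\TT$ as a direct product of one copy of $R^{(1)}_{\C/\R}\G_m$ for each $\pi$-fixed simple root and one copy of $R_{\C/\R}\G_m$ for each $\pi$-orbit of size two. (In the degenerate case $\nu=\id$ this reduces to the statement that the compact maximal torus $\TT$ is a product of copies of $R^{(1)}_{\C/\R}\G_m$.)

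I do not anticipate a serious obstacle; the content is entirely in pinning down the $\Gamma$-action correctly. The two points needing care are: that $\phi(\nu)$ acts on $\X_*(\TT)$ by an \emph{honest} permutation of $S^\vee$ — this is why Subsection \ref{ss:A-defs} arranges for $\phi$ to land in $\Aut(G,T,B,\{X_\alpha\})$, making $\phi(\nu)$ the canonical, inner-part-free lift of the diagram automorphism — and that $-\pi$ is genuinely an involution, and hence a legitimate $\Gamma$-action, which uses the hypothesis $\nu^2=\id$ (this hypothesis also forces the orbits to have size at most $2$). The one small computation, recognising $\Z^2$ with $\gamma$ acting by minus a transposition as the regular module $\Z[\Gamma]$, is disposed of by the change of basis above, so that no appeal to the classification of $\Z[\Gamma]$-lattices is needed.
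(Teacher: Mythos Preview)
Your proposal is correct and follows essentially the same argument as the paper: both use compactness to get the untwisted $\gamma$-action as $-\id$ on $\X_*(T)$, simple connectedness to get $S^\vee$ as a $\Z$-basis, then compute the twisted action as $-\pi$ and decompose into $\pi$-orbits, with the identical change of basis $(\alpha^\vee,-\beta^\vee)$ on two-element orbits to recognise $\Z[\Gamma]$. The paper's proof merely names the orbits explicitly as $\{\alpha_i\}$ and $\{\beta_j,\beta_j'\}$ rather than speaking of orbits abstractly.
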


\begin{proof}
Consider the action of the automorphism $\nu\in\Aut\BRD(G)$ on $S$.
We write
\[S=\{\alpha_1,\dots,\alpha_m,\ \beta_1,\beta_1', \,
     \beta_2,\beta_2',\,\dots,\, \beta_n,\beta_n'\},\]
where $\nu(\alpha_i)=\alpha_i$ and $\nu(\beta_j)=\beta_j'\neq\beta_j$.

Consider the action of $\Gamma$ on the character group
$X^*(T)=\X^*(\TT_\C)$ and on $S\subset \X^*(T)$.
Since the torus $\TT$ is compact, the element $\gamma$
acts by multiplication by $-1$:
\[\upgam\alpha_i=-\alpha_i,\quad \upgam\beta_j=-\beta_j,
        \quad \upgam\beta_j'=-\beta_j'.\]

Consider the twisted action of $\Gamma$ on $\X^*(T)$, that is, the action defined by the
twisted form $_\nu\hm\TT$ of $\TT$:
\[\upgams \alpha_i=\nu(\upgam\alpha_i)=-\alpha_i,\quad
    \upgams\beta_j=\nu(\upgam\beta_j)=-\beta_j',\quad
    \upgams\beta_j'=\nu(\upgam\beta_j')=-\beta_j.\]
It follows that the  similar formulas are true for the simple coroots in $S^\vee$:
\[\upgams \alpha_i^\vee=-\alpha_i^\vee,\quad
    \upgams\beta_j^\vee=-\beta_j^{\hs\prime\hs\vee},\quad
    \upgams(\beta_j^{\hs\prime\hs\vee})=-\beta_j^\vee.\]
Set $\beta_j^{\hs\prime\prime\hs\vee}=-\beta_j^{\hs\prime\hs\vee}$.
Then
\begin{equation}\label{e:action-on-coroots}
\upgams \alpha_i^\vee=-\alpha_i^\vee,\quad
\upgams\beta_j^\vee=\beta_j^{\hs\prime\prime\hs\vee},\quad
\upgams(\beta_j^{\hs\prime\prime\hs\vee})=\beta_j^\vee.
\end{equation}

Since $G$ is simply connected, the set of simple coroots $S^\vee$
is a basis of the cocharacter lattice $\X_*(T)$.
Hence the set
\[\{\alpha_1^\vee,\dots,\alpha_m^\vee,
    \ \beta_1^\vee,\beta_1^{\hs\prime\prime\hs\vee}, \,
    \beta_2^\vee,\beta_2^{\hs\prime\prime\hs\vee},\,\dots,\,
    \beta_n^\vee,\beta_n^{\hs\prime\prime\hs\vee}\},\]
is a basis of  $\X_*(T)$ as well.
Now it follows from \eqref{e:action-on-coroots}
that $_\nu\hm\TT$ is isomorphic to the direct product
of $m$ copies of $R^{(1)}_{\C/\R}\G_m$ and $n$ copies of $R_{\C/\R}\G_m$,
which proves the lemma.
\end{proof}

\begin{lemma} \label{l:fund-in-quasi-comp'}
Under the assumptions of Lemma \ref{l:fund-in-quasi-comp},
the torus $_\nu\hm\TT$ is a fundamental torus of $_\nu\GG$.
\end{lemma}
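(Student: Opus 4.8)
The plan is to exhibit $_\nu\hm\TT$ as the centralizer in $_\nu\GG$ of a maximal compact torus; since a fundamental torus may be described as the centralizer of a maximal compact torus (see the proof of Lemma \ref{l:fundamental-inner}), this suffices. I keep the notation of the proof of Lemma \ref{l:fund-in-quasi-comp}: $S=\{\alpha_1,\dots,\alpha_m,\ \beta_1,\beta_1',\dots,\beta_n,\beta_n'\}$ with $\nu(\alpha_i)=\alpha_i$ and $\nu(\beta_j)=\beta_j'$, and on the cocharacter lattice $\X_*(T)$ the twisted Galois action $\upgams$ acts as $-\nu$, where I also write $\nu$ for the automorphism of $\X_*(T)$ fixing each $\alpha_i^\vee$ and interchanging $\beta_j^\vee$ with $(\beta_j')^\vee$. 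The first step is to identify the maximal compact subtorus $\TT_0\subset{}_\nu\hm\TT$: the cocharacter group of $\TT_0$, sitting inside $\X_*(T)$, spans over $\Q$ the $(-1)$-eigenspace of $\upgams$; since $\upgams=-\nu$, this is the $\nu$-fixed subspace $(\X_*(T)\otimes\Q)^\nu$, spanned by the $\alpha_i^\vee$ together with the $\beta_j^\vee+(\beta_j')^\vee$. (Equivalently, $\TT_0$ is the product of the $R^{(1)}_{\C/\R}\G_m$-factors in the decomposition of $_\nu\hm\TT$ obtained in Lemma \ref{l:fund-in-quasi-comp}.)

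The heart of the argument is the claim that $Z_G(T_0)=T$, where $T_0\subset T$ is the complexification of $\TT_0$. Now $Z_G(T_0)$ is generated by $T$ together with the root subgroups $U_\beta$ for those $\beta\in R$ whose restriction to $T_0$ is trivial, i.e.\ with $\langle\beta,\mu\rangle=0$ for all $\mu\in(\X_*(T)\otimes\Q)^\nu$. Since $\nu$ respects the perfect pairing between $\X^*(T)$ and $\X_*(T)$ and satisfies $\nu^2=\id$, both $\X^*(T)\otimes\Q$ and $\X_*(T)\otimes\Q$ split into $(\pm1)$-eigenspaces of $\nu$, and under the pairing the $(+1)$-eigenspace on one side annihilates the $(-1)$-eigenspace on the other. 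Hence $\beta$ restricts trivially to $T_0$ exactly when $\beta$ lies in the $(-1)$-eigenspace of $\nu$ on $\X^*(T)\otimes\Q$, that is, when $\nu(\beta)=-\beta$. But $\nu$ is an automorphism of the based root datum $\BRD(G,T,B)$, so it preserves $R_+$; as exactly one of $\beta,-\beta$ lies in $R_+$, no root $\beta$ can satisfy $\nu(\beta)=-\beta$. Therefore there are no such roots and $Z_G(T_0)=T$.

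It remains to descend to $\R$ and conclude. As $\TT_0\subset{}_\nu\GG$ is an $\R$-subtorus whose centralizer has complexification $Z_G(T_0)=T$, the centralizer $Z_{{}_\nu\GG}(\TT_0)$ is itself an $\R$-torus (a real form of $T$); it contains $_\nu\hm\TT$, which is commutative and contains $\TT_0$, and has the same complexification, so $Z_{{}_\nu\GG}(\TT_0)={}_\nu\hm\TT$. Moreover $\TT_0$ is a maximal compact torus of $_\nu\GG$: any compact $\R$-torus $\TT_1\supseteq\TT_0$ is commutative, hence lies in $Z_{{}_\nu\GG}(\TT_0)={}_\nu\hm\TT$, and being compact it lies in the maximal compact subtorus $\TT_0$ of $_\nu\hm\TT$, so $\TT_1=\TT_0$. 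Thus $_\nu\hm\TT$ is a maximal torus of $_\nu\GG$ containing the maximal compact torus $\TT_0$ (equivalently, it is $Z_{{}_\nu\GG}(\TT_0)$), hence a fundamental torus, as asserted. The only points requiring care are the identification of $\TT_0$ via the $\nu$-fixed cocharacters and the eigenspace/annihilator duality for $\nu$ on $\X^*$ and $\X_*$; the crucial input $Z_G(T_0)=T$ then reduces to the elementary observation that $\nu$ preserves the positive roots, and the passage to $\R$ is formal.
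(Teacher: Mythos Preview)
Your proof is correct and follows the same overall architecture as the paper: identify the maximal compact subtorus $\TT_0$ of $_\nu\hm\TT$ as the $\nu$-fixed subtorus (with cocharacter lattice $(X^\vee)^\nu$), show that its centralizer in $G$ is $T$, and then conclude by the standard ``any larger compact torus lies in the centralizer'' argument. The descent to $\R$ and the identification of $\TT_0$ via the $(-1)$-eigenspace of $\upgams=-\nu$ are handled correctly.

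The genuine difference lies in how you prove that no root restricts trivially to $T_0$ (what the paper calls $R^\perp=\emptyset$, proved there as a separate Lemma~\ref{l:perp}). The paper argues constructively: for each positive root it exhibits an explicit element of $(V^\vee)^\nu$, built from fundamental coweights, that pairs nontrivially with it. You instead use eigenspace duality: since $\nu$ respects the perfect pairing and $\nu^2=\id$, the annihilator of $(X^\vee\otimes\Q)^\nu$ in $X\otimes\Q$ is exactly the $(-1)$-eigenspace of $\nu$, so a root $\beta$ restricting trivially would satisfy $\nu(\beta)=-\beta$; this is impossible because $\nu$, being a based root datum automorphism, preserves $R_+$. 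Your argument is shorter and more conceptual, and it makes transparent \emph{why} the simply-connected/pinned setup matters (namely, $\nu$ preserves positivity). The paper's coweight computation is more hands-on but yields the same conclusion with more bookkeeping.
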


\begin{proof}
We consider the group  $\nn=\{1,\nu\}$ of order 2 acting on $\TT$.
We prove that $\TT^\nn:=\{t\in T\mid \nu(t)=t\}$ is a maximal compact torus of $_\nu\GG$.
Let $\zz$ denote the centralizer of $T^\nn$ in $\gg=\Lie G$.
Then $\zz\supset\tl:=\Lie T$.
It follows that
\[\zz=\tl\oplus\bigoplus_{\eta\in R^\perp}\gg_\eta\hs,\quad
        \text{where}\ \, R^\perp=\{\hs\eta\in R\ \hs\big|\ \hs \eta|_{T^\nn}=1\hs\}.\]
By Lemma \ref{l:perp} below, we have $R^\perp=\emptyset$, whence $\zz=\tl$.
Since the centralizer of a torus in a connected algebraic group
is connected (see Humphreys \cite[Theorem 22.3]{Humphreys}),
we conclude that the centralizer of $T^\nn$ in $G$ coincides with $T$,
and that the centralizer of $\TT^\nn$ in $_\nu\GG$ is $_\nu\hm\TT$.

Let $\SSS$ be any compact torus in $_\nu\GG$ containing $\TT^\nn$.
Then $\SSS$ is contained in the centralizer of $\TT^\nn$
in $_\nu\GG$, and hence  $\SSS\subset \hs_\nu\hm\TT$.
Since $\TT^\nn$ is a maximal compact subtorus of $_\nu\hm\TT$, we conclude that $\SSS=\TT^\nn$.
Thus $\TT^\nn$ is a maximal compact torus of $_\nu\GG$.
Since $_\nu\hm\TT$ contains the maximal compact torus $\TT^\nn$ of $_\nu\GG$,
it is a fundamental torus of $_\nu\GG$, as required.
\end{proof}

\begin{lemma}\label{l:perp}
 $R^\perp=\emptyset$.
\end{lemma}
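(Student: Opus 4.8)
The plan is to translate the condition ``$\eta\in R^\perp$'', i.e. $\eta|_{T^\nn}=1$, into a statement about the $\nu$-action on the character and cocharacter lattices $X=\X^*(T)$ and $X^\vee=\X_*(T)$, and then derive a contradiction from the fact that $\nu$, being (induced by) an automorphism of the based root datum $\BRD(G,T,B)$, permutes the simple roots and therefore cannot send a positive root to a negative one. So the first thing I would record is: $\nu$ acts on $T$ through a group automorphism, hence on $X$ and $X^\vee$ by automorphisms preserving the pairing $\langle\,,\,\rangle\colon X\times X^\vee\to\Z$; and since $\nu(S)=S$, also $\nu(R_+)=R_+$. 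In particular $\nu(\eta)$ is a positive root whenever $\eta$ is, so $\nu(\eta)\neq-\eta$ for every $\eta\in R$.

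Now suppose for contradiction that some $\eta\in R^\perp$. Then $\eta$ restricts trivially to the identity component $(T^\nn)^\circ$, which is the subtorus of $T$ whose cocharacter lattice is exactly the $\nu$-fixed sublattice $(X^\vee)^\nu$; hence $\langle\eta,\lambda\rangle=0$ for every $\nu$-fixed $\lambda\in X^\vee$. Applying this to the element $\lambda+\nu(\lambda)$ for an arbitrary $\lambda\in X^\vee$, and using $\nu$-invariance of the pairing together with $\nu^2=\id$ (so that $\langle\eta,\nu(\lambda)\rangle=\langle\nu(\eta),\lambda\rangle$), I obtain $\langle\eta+\nu(\eta),\lambda\rangle=0$ for all $\lambda\in X^\vee$. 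Since the pairing $X\times X^\vee\to\Z$ is perfect, this forces $\nu(\eta)=-\eta$, contradicting the previous paragraph. Therefore $R^\perp=\emptyset$.

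I do not expect a real obstacle here: the argument is a short formal computation with the $\nu$-action. The only point needing a line of care is the identification $\X_*\big((T^\nn)^\circ\big)=(X^\vee)^\nu$ and the resulting equivalence ``$\eta$ is trivial on $(T^\nn)^\circ$'' $\iff$ ``$\langle\eta,\lambda\rangle=0$ for all $\nu$-fixed $\lambda$'', which is the standard description of characters of a subtorus; once this is in place, the rest is immediate. (Note that simple connectedness of $\GG$, used elsewhere in this appendix, is not needed for this lemma.)
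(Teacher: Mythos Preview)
Your proof is correct and takes a genuinely different, more streamlined route than the paper's. Both arguments begin with the same reduction---$\eta\in R^\perp$ forces $\langle\eta,\lambda\rangle=0$ for every $\nu$-fixed $\lambda\in X^\vee$---but then diverge. The paper partitions $S$ into $\nu$-fixed simple roots $\alpha_i$ and $\nu$-swapped pairs $(\beta_j,\beta_j')$, and for each positive root $\eta$ exhibits an explicit $\nu$-fixed rational coweight (either the fundamental coweight $\omega_{\alpha,i}^\vee$ or the sum $\omega_{\beta,j}^\vee+\omega_{\beta,j}^{\hs\prime\hs\vee}$) that pairs nontrivially with $\eta$. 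You instead apply the vanishing to the trace $\lambda+\nu(\lambda)$ for arbitrary $\lambda\in X^\vee$, use $\nu$-invariance of the pairing together with $\nu^2=\id$, and invoke perfectness to conclude $\nu(\eta)=-\eta$, which contradicts $\nu(R_+)=R_+$.

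Your approach avoids fundamental coweights, the case split on $\nu$-orbits in $S$, and the passage to $\Q$-coefficients; it is shorter and makes transparent that the only property of $\nu$ actually used is $\nu(S)=S$. The paper's explicit construction, on the other hand, reuses the description of $\nu$-orbits on $S$ already set up in the proof of the preceding lemma. Your closing remark that simple connectedness is not needed for this lemma is correct; the paper's argument does not use it either.
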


\begin{proof}
Recall that $X^\vee$ is the cocharacter group of $T$, $X^\vee=\X_*(T)$.
Then $\X_*(T^\nn)=(X^\vee)^\nn$, where by $(\ )^\nn$ we denote the group of fixed points of $\nu$.
Write $V^\vee=X^\vee\otimes_\Z\Q$. Then
\[(V^\vee)^\nn:=\{x\in V^\vee\mid\nu(x)=x\}=(X^\vee)^\nn\otimes_\Z\Q.\]
We have
\[ R^\perp\subset\{\eta\in R\mid \langle\eta,x\rangle=0\ \forall x\in \X_*(T^\nn)\}.\]
Since
\begin{align*}\{\eta\in R\mid \langle\eta,x\rangle=0\ \forall x\in \X_*(T^\nn)\}
      &=\{\eta\in R\mid \langle\eta,x\rangle=0\ \forall x\in (X^\vee)^\nn\}\\
      &=\{\eta\in R\mid \langle\eta,x\rangle=0\ \forall x\in (V^\vee)^\nn\},
\end{align*}
it suffices to show that for any $\eta\in R$ there exists $x_\eta\in (X^\vee)^\nn$
such that $\langle\eta,x_\eta\rangle\neq 0$.

We consider the set $R_+$ of positive roots and the set $R_{\hs-}$ of negative roots in $R$.
Let $\eta\in R_+$. We write
\[\eta=\sum_{i=1}^m a_i\alpha_i +\sum_{j=1}^n (b_j\beta_j+b_j'\beta_j'),\]
where $a_i,b_j,b_j'\ge 0$, and at least one of the coefficients $a_i,b_j,b_j'$ is nonzero.

If $a_i>0$ for some $i$, then we take $x_\eta=\omega_{\alpha,i}^\vee\in V^\vee$,
the fundamental coweight corresponding to $\alpha_i$.
Then
\[\langle\alpha_{i'},\omega_{\alpha,i}^\vee\rangle=\delta_{i,i'}\hs,\quad
     \langle\beta_j,\omega_{\alpha,i}^\vee\rangle=0,\quad \langle\beta_j',\omega_{\alpha,i}^\vee\rangle=0,\]
whence $\langle\eta,x_\eta\rangle=a_i>0$.
Since $x_\eta\in (V^\vee)^\nn$, we conclude that $\eta\notin R^\perp$.

If $b_j>0$ or $b_j'>0$ for some $j$, then $b_j+b_j'>0$.
We take  $x_\eta=\omega_{\beta,j}^\vee+\omega_{\beta,j}^{\hs\prime\hs\vee}\in V^\vee$,
where $\omega_{\beta,j}^\vee$ and $\omega_{\beta,j}^{\hs\prime\hs\vee}$
are the fundamental coweights corresponding to $\beta_j$ and $\beta_j'$, respectively.
Then  $\langle\eta,x_\eta\rangle=b_j+b_j'>0$.
Since  $x_\eta\in (V^\vee)^\nn$,
we conclude that $\eta\notin R^\perp$.

We have proved that $R^\perp\cap R_+=\emptyset$.
We have $-R^\perp=R^\perp$, whence
\[R^\perp\cap R_{\hs-}=(-R^\perp)\cap(-R_+)=-(R^\perp\cap R_+)=\emptyset.\]
Thus $R^\perp=\emptyset$, which completes the proofs of Lemmas \ref{l:perp} and \ref{l:fund-in-quasi-comp'}.
\end{proof}

\begin{proof}[Proof of Lemma \ref{t:fundam}]
By Lemmas \ref{l:fund-in-quasi-comp} and \ref{l:fund-in-quasi-comp'},
the assertion (i) of Lemma \ref{t:fundam} holds in the case when $\GG$ is quasi-compact.
By Lemmas \ref{l:quasi-compact} and \ref{l:fundamental-inner},
this assertion holds for all simply connected semisimple $\R$-groups.
The assertion (ii) of Lemma \ref{t:fundam} follows  from (i); see Remark \ref{r:H2}.
\end{proof}

\textsc{Acknowledgements.}
The author is grateful to Willem de Graaf, Boris Kunyavski\u{\i}, and especially to Andrei Gornitskii,  for very helpful comments.

\end{document}